\theoremstyle{plain}
\newtheorem*{Theorem}{Theorem}
\newtheorem{theorem}{Theorem}[section]
\newtheorem{proposition}[theorem]{Proposition}
\newtheorem{lemma}[theorem]{Lemma}
\newtheorem{corollary}[theorem]{Corollary}
\theoremstyle{definition}
\newtheorem{definition}[theorem]{Definition}
\newtheorem{example}[theorem]{Example}
\theoremstyle{remark}
\newtheorem{remark}[theorem]{Remark}
\numberwithin{equation}{theorem}
\DeclareMathOperator{\Supp}{Supp }
\DeclareMathOperator{\spec}{Spec }
\DeclareMathOperator{\id}{inj-dim}
\DeclareMathOperator{\fd}{flat-dim}
\DeclareMathOperator{\pd}{proj-dim}
\DeclareMathOperator{\Thick}{Thick}
\DeclareMathOperator{\lowlen}{\ell \ell}
\DeclareMathOperator{\LF}{\mathsf{L}F^\varphi}
\DeclareMathOperator{\DDfg}{\mathsf{D}^{fg}}
\DeclareMathOperator{\DDfl}{\mathsf{D}^{fl}}
\DeclareMathOperator{\IIfg}{\mathsf{I}^{fg}}
\DeclareMathOperator{\IIfl}{\mathsf{I}^{fl}}
\DeclareMathOperator{\DDfgh}{\mathsf{D}_{+}^{fg}}
\DeclareMathOperator{\DDfgl}{\mathsf{D}_{-}^{fg}}
\DeclareMathOperator{\DDfgb}{\mathsf{D}_{b}^{fg}}
\DeclareMathOperator{\DDflb}{\mathsf{D}_{b}^{fl}}
\DeclareMathOperator{\CC}{\mathsf{S}}
\DeclareMathOperator{\CCb}{\mathsf{S}_{b}}
\DeclareMathOperator{\CCh}{\mathsf{S}_{+}}
\DeclareMathOperator{\CCl}{\mathsf{S}_{-}}
\DeclareMathOperator{\CCfg}{\mathsf{S}^{fg}}
\DeclareMathOperator{\CCfl}{\mathsf{S}^{fl}}
\DeclareMathOperator{\DDh}{\mathsf{D}_{+}}
\DeclareMathOperator{\DDl}{\mathsf{D}_{-}}
\DeclareMathOperator{\DDb}{\mathsf{D}_{b}}
\DeclareMathOperator{\ann}{ann}
\DeclareMathOperator{\phiR}{R^\varphi}
\DeclareMathOperator{\phiRi}{R^{\varphi^{i}} }
\newcommand{\m}{\mathfrak{m}}
\newcommand{\p}{\mathfrak{p}}
\newcommand{\n}{\mathfrak{n}}
\newcommand{\qis}{\simeq}
\newcommand{\tensor}{\otimes}
\newcommand{\dtensor}{\otimes^\mathbf{L}}
\def\Hom{\operatorname{Hom}}
\DeclareMathOperator{\RHom}{\operatorname{RHom}}
\def\NN{\mathbb N}
\def\DD{\mathsf{D}}
\def\II{\mathsf{I}}
\def\blank{\underline{\hspace{8pt}}}
\newcommand{\surject}{\twoheadrightarrow}
\begin{document}

\title[Contracting Endomorphisms]{Base Change Along the Frobenius Endomorphism And The Gorenstein Property}

\author{Pinches Dirnfeld}
\address{Pinches Dirnfeld, Department Of Mathmatics, University of Utah, 155 South 1400 East, JWB 233, Salt Lake City, UT 84112}
\email{dirnfeld@math.utah.edu}

\date{\today}


\thanks{I thank my advisor Srikanth Iyengar for the many helpful discussions and reading many versions of this paper. This work was partly supported
by a grant from the National Science Foundation, DMS-1700985.}

\begin{abstract}
Let $R$ be a local ring of positive characteristic and $X$ a complex with nonzero finitely generated homology and finite injective dimension. We prove that if derived base change of $X$ via the Frobenius (or more generally, via a contracting) endomorphism has finite injective dimension then $R$ is Gorenstein. 
\end{abstract}

\maketitle

\section{Introduction}

Kunz \cite{MR252389} proved that a local ring $(R, \m ,k)$ of positive characteristic is regular if and only if some (equivalently, every) power of the Frobenius endomorphism is flat as an $R$-module. Since then analogous characterizations of other properties of the ring, such as complete intersections (by Rodicio \cite{MR963004}), Gorenstein (by Goto \cite{MR447212}) and Cohen-Macualay (by Takahashi and Yoshino \cite{MR2073291}), have been obtained. Many of these results have been established for the larger family of \emph{contracting} endomorphisms. Following \cite{MR2197067}, an endomorphism $\varphi \colon R \to R$ is said to be \emph{contracting} if $\varphi^i(\m) \subseteq \m^2$ for some $i > 0$. The Frobenius endomorphism is one example but there are many interesting examples even when $R$ is of characteristic 0. Avramov, Iyengar and Miller \cite{MR2197067} generalized Kunz's theorem to apply to any contracting endomorphism. For other results concerning contracting endomorphisms see, for example, Avramov, Hochster, Iyengar and Yao \cite{MR2915536}, Rahmati \cite{Rah09} and Nasseh and Sather-Wagstaff \cite{NaSa15}. 

In this paper we study homological properties of modules and complexes under base change along contracting endomorphisms. Given an endomorpishm $\varphi \colon R \to R$, we write $\phiR$ for the $R$ bimodule with the right module structure induced by $\varphi$ and the left usual $R$ module structure. Thus given an $R$-complex $X$ the base change along $\varphi$ is $\phiR \dtensor_RX$ where $R$ acts on the left through $\phiR$. The main result of this work is the following, proved in Section \ref{mainsection}:
\begin{theorem}\label{main result}
	Let $\varphi \colon R \to R$ be a contracting endomorphism. The following conditions are equivalent.
	\begin{enumerate}
		\item $R$ is Gorenstein.
		\item There exists an $R$-complex $X$ with nonzero finitely generated homology and finite injective dimension for which the base change $\phiR \dtensor_RX$ has finite injective dimension.
		\item For every $X$ with nonzero finitely generated homology and finite injective dimension the base change $\phiR \dtensor_RX$ has finite injective dimension.
	\end{enumerate}
\end{theorem}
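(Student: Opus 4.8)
The plan is to prove the cycle $(1)\Rightarrow(3)\Rightarrow(2)\Rightarrow(1)$, the last implication carrying essentially all the content. The implication $(3)\Rightarrow(2)$ is formal once one knows the relevant family of complexes is nonempty, which is arranged by passing to the completion, where the dualizing complex provides such an $X$; and $(1)\Rightarrow(3)$ is quick: over a Gorenstein local ring a complex in $\DDfgb(R)$ has finite injective dimension iff it has finite projective dimension, so such an $X$ is perfect, and then $\phiR\dtensor_RX$ is perfect too — a minimal free resolution of $X$ is a bounded complex of finitely generated free modules, $\phiR$ is free of rank one as a left module, and $\phiR\dtensor_R(-)$ merely applies $\varphi$ to the differential matrices, which keeps the complex bounded and minimal. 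Finite projective dimension of $\LF X$ then yields finite injective dimension, again by Gorensteinness.

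For $(2)\Rightarrow(1)$ I would first reduce to $R$ complete: completion is faithfully flat, sends $\varphi$ to a contracting endomorphism, commutes with $\LF$, preserves boundedness and finiteness of injective dimension, and does not affect Gorensteinness. So assume $R$ has a dualizing complex $D$ with $\RHom_R(D,D)\simeq R$, and write $(-)^{\dagger}=\RHom_R(-,D)$ and $(-)^{\vee}=\RHom_R(-,R)$. Then $(-)^\dagger$ is a duality on $\DDfgb(R)$ interchanging finite injective and finite projective dimension, $C$ has finite injective dimension iff $C^\dagger$ is perfect, and in that case $C\simeq (C^{\dagger})^{\vee}\dtensor_R D$ with $(C^{\dagger})^{\vee}$ perfect. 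Given $X$ as in $(2)$, write $X\simeq Q\dtensor_R D$ with $Q$ perfect and nonzero; since $\LF$ is monoidal and preserves perfectness, $\LF X\simeq\LF Q\dtensor_R\LF D$ with $\LF Q$ perfect and nonzero. Applying $(-)^\dagger$, using $\RHom_R(\LF Q\dtensor_R\LF D,D)\simeq(\LF Q)^{\vee}\dtensor_R(\LF D)^{\dagger}$ and additivity of projective dimension under $\dtensor_R$, finiteness of $\id_R\LF X$ forces $(\LF D)^{\dagger}$ to be perfect, i.e. $\id_R\LF D<\infty$. (Boundedness of $\LF D$, needed to interpret $(\LF D)^\dagger$, holds because tensoring with the nonzero perfect complex $\LF Q$ reflects boundedness.)

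The heart is then to deduce Gorensteinness from $\id_R\LF D<\infty$. By the structural description $\LF D\simeq Q_1\dtensor_R D$ with $Q_1$ perfect and nonzero, and since $\LF$ is monoidal and preserves perfectness, iterating gives $R^{\varphi^{n}}\dtensor_R D\simeq Q_n\dtensor_R D$ with $Q_n$ perfect and nonzero for every $n\ge1$. On the other hand, for a minimal free resolution $F$ of $D$ the complex $R^{\varphi^{n}}\dtensor_R D$ is represented by the complex obtained from $F$ by applying $\varphi^{n}$ to its differentials, which is still minimal because $\varphi^{n}(\m)\subseteq\m$; hence $R^{\varphi^{n}}\dtensor_R D$ and $D$ have the same Poincaré series $P_D(t)=\sum_j\beta_j(D)t^{j}$. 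Comparing the two descriptions, and using that Poincaré series are multiplicative under $\dtensor_R$ with a perfect factor, gives $P_D(t)=P_{Q_n}(t)\,P_D(t)$ in the domain $\ZZ(\!(t)\!)$; as $P_D\ne0$, $P_{Q_n}(t)=1$, so $Q_n\simeq R$ and $R^{\varphi^{n}}\dtensor_R D\simeq D$ for all $n$. Thus the minimal complexes $\varphi^{n}(F)$ and $F$ are quasi-isomorphic, hence isomorphic as complexes (a quasi-isomorphism of minimal complexes of free modules is an isomorphism), so each differential $d_j$ of $F$ is conjugate by invertible matrices to $\varphi^{n}(d_j)$, whose entries lie in $\varphi^{n}(\m)$. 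Fixing $i>0$ with $\varphi^{i}(\m)\subseteq\m^{2}$ one has $\varphi^{n}(\m)\subseteq\m^{2^{\lfloor n/i\rfloor}}$, so the entries of $d_j$ lie in $\bigcap_m\m^{m}=0$ by Krull's intersection theorem; hence $F$ has zero differentials, and since it resolves the bounded complex $D$ it is bounded, so $D$ is perfect. Then $\RHom_R(D,D)\simeq D^{\vee}\dtensor_R D\simeq R$ makes $D$ an invertible object of $\mathsf D(R)$, so $D\simeq R[s]$ for some $s$, whence $\id_R R=\id_R D<\infty$ and $R$ is Gorenstein.

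The step I expect to be the main obstacle is precisely the passage from $\id_R\LF D<\infty$ to Gorensteinness: identifying $R^{\varphi^{n}}\dtensor_R D$ with the $\varphi^{n}$-twist of a minimal free resolution of $D$ (so that Poincaré series are unchanged), bookkeeping the various $R$-module structures carefully enough that the identity $P_D=P_{Q_n}P_D$ is genuinely valid, and combining the two pictures; once that is in place the Krull-intersection endgame is short, and the reduction to the complete case, the dualizing-complex calculus, and the easy directions are routine.
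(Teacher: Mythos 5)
Your strategy is genuinely different from the paper's: the core of your argument identifies $R^{\varphi^{n}}\dtensor_R D$ both with the $\varphi^{n}$-twist of a minimal free resolution of $D$ (hence with the same Poincar\'e series as $D$) and with $Q_n\dtensor_R D$ for a perfect $Q_n$, cancels in $\ZZ[[t]][t^{-1}]$, and finishes with Krull's intersection theorem; the paper instead tensors with the Koszul complex to reduce to finite-length homology and uses Matlis duality plus the Hopkins--Neeman theorem to show that any nonzero $X\in\IIfl(R)$ generates $\IIfl(R)$ as a thick subcategory, so that finiteness of $\id\LF(X)$ propagates to all iterates $\LF^{i}$. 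Your endgame (twist--compare--Krull) is correct and attractive, and the dualizing-complex calculus you invoke is standard once the relevant complexes are known to lie in $\DDfgb(R)$.

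The genuine gap is exactly at the hinge that starts your iteration: passing from $\id_R\LF(X)<\infty$ to $\id_R\LF(D)<\infty$. To form $(\LF D)^{\dagger}$ and run the perfectness cancellation you must first know that $\LF D$ is homologically bounded, and your justification --- ``tensoring with the nonzero perfect complex $\LF Q$ reflects boundedness'' --- is not one of the standard accounting principles and is not a formal consequence of them: boundedness of $P\dtensor_R W$ does not pass to $k\dtensor_R(P\dtensor_R W)$ (already $k\dtensor_Rk$ is unbounded over a singular ring), and the naive inequality $\sup H(P\dtensor_RW)\ge\sup H(W)$ fails for perfect $P$ without full support (over $R=k[[x,y]]/(xy)$ one has $\sup H\bigl(\Sigma^{-1}K(x)\dtensor_RR/(y)\bigr)=-1<0=\sup H(R/(y))$); note that $\Supp\LF Q$ need not equal $\Spec R$ for a general contracting $\varphi$, so support hypotheses cannot be assumed. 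This step is carrying essentially the full weight of the theorem: by Proposition~\ref{finite pd} of the paper, boundedness of $\LF^{i}(D)$ for $i\gg0$ is already equivalent to $\pd_RD<\infty$, i.e.\ to $R$ being Gorenstein, so the single-iterate boundedness of $\LF D$ must be proved, not asserted (if it is secured, the subsequent cancellation $\pd(A\dtensor_RB)=\pd A+\pd B$ for nonzero $A,B\in\DDfgb(R)$ is indeed routine). A smaller slip: in (iii)$\Rightarrow$(ii) the dualizing complex of $\widehat R$ does not have finitely generated homology over $R$, so it is not an admissible $X$ for the statement over $R$; the paper instead exhibits $K^R\tensor_RE$.
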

An $R$-complex has \emph{finite injective dimension} if it is quasi-isomorphic to a bounded complex of injective modules. 

In the theorem above (i)$\Rightarrow$(iii) holds because in a Gorenstein local ring complexes of finite injective dimension coincide with complexes of finite projective dimension. For (iii)$\Rightarrow$(ii) we only need to show that every local ring has a complexes of finite injective dimension with nonzero finitely generated homology. (ii)$\Rightarrow$(i) is the crucial implication. This is proven in two steps. 1) When $H(X)$ is finitely generated, if $\phiRi \dtensor_RX$ is bounded in homology for $i \gg 0$ then $X$ has finite projective dimension; this follows from well known arguments, see \ref{finite pd} for details. 2) When $X$ is a complex with nonzero finite length homology and finite injective dimension, if the base change $\phiR \dtensor_RX$ has finite injective dimension then  the same holds for $\phiRi \dtensor_RX$ for every $i>0$, see \ref{inj-DLH}. The key tool in the proof of the second step is a theorem of Hopkins \cite{MR932260} and Neeman \cite{MR1174255} concerning perfect complexes.

Theorem \ref{main result} is analogous statement to the following characterization of Gorenstein rings by Falahola and Marley \cite{FM} 
\begin{theorem}
 Let $\varphi \colon R \to R$ be a contracting endomorphism where $R$ is a Cohen-Macaulay local ring and $\omega_R$ a canonical module. Then $R$ is Gorenstein if and only if  $\phiR \tensor_R \omega_R$ has finite injective dimension.
\end{theorem}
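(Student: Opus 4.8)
The forward implication is immediate: if $R$ is Gorenstein then $\omega_R$ is free of rank one, so $\phiR\tensor_R\omega_R\cong\phiR$, which is free of rank one as a left $R$-module, and $\id_R R=\dim R<\infty$. For the converse, I would first reduce to the case that $R$ is complete---completion preserves the canonical module, finite injective dimension, and the contracting endomorphism $\varphi$ (which extends to a contracting endomorphism $\widehat\varphi$ of $\widehat R$ with ${}^{\widehat\varphi}\widehat R\tensor_{\widehat R}\omega_{\widehat R}=\widehat{\phiR\tensor_R\omega_R}$), and $R$ is Gorenstein exactly when $\widehat R$ is. Then $R$ has a dualizing complex, which we may take to be $\omega_R$ placed in a single degree since $R$ is Cohen--Macaulay; write $(-)^\dagger=\RHom_R(-,\omega_R)$ for the associated duality on $\DDfgb(R)$. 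Since $(-)^\dagger$ interchanges complexes of finite injective dimension and perfect complexes, the hypothesis is equivalent to the statement that $\RHom_R(\phiR\tensor_R\omega_R,\omega_R)$ is a perfect complex.

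To finish I would extract the Gorenstein property from perfectness of this complex. The plan is to resolve $\omega_R$ by finitely generated free modules and use the concrete description of base change along $\varphi$ (apply $\varphi$ to the boundary matrices) to identify $\RHom_R(\phiR\tensor_R\omega_R,\omega_R)$, up to quasi-isomorphism, with a complex assembled from $\phiR$ and $\RHom_R(\omega_R,\omega_R)\simeq R$; then, iterating over the powers $\varphi^i$ and running the mechanism behind the implication (ii)$\Rightarrow$(i) of Theorem \ref{main result}---passing to a complex with finite length homology and invoking \ref{finite pd} together with \ref{inj-DLH} and the Hopkins--Neeman theorem---force $\omega_R$ to have finite projective dimension, equivalently $R$ to be Gorenstein. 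In outline, finite injective dimension of $\phiR\tensor_R\omega_R$, fed back through $\varphi^i$ and dualized, should pin the Cohen--Macaulay type of $R$ down to one.

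The main obstacle is that the hypothesis concerns the ordinary tensor product $\phiR\tensor_R\omega_R$ rather than the derived base change $\phiR\dtensor_R\omega_R$, so that Theorem \ref{main result} cannot simply be applied with $X=\omega_R$: since $\omega_R$ has infinite projective dimension whenever $R$ fails to be Gorenstein, the modules $\Tor^R_i(\phiR,\omega_R)$ need not vanish and $\phiR\dtensor_R\omega_R$ need not even have bounded homology. The heart of the proof is therefore to show that finite injective dimension of the ordinary base change $\phiR\tensor_R\omega_R$ already forces the requisite control---on these higher Tor modules, or dually on the perfect complex $\RHom_R(\phiR\tensor_R\omega_R,\omega_R)$---so that one lands in the situation governed by the Kunz--Avramov--Iyengar--Miller circle of results, from which the Gorenstein conclusion follows.
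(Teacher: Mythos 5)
The paper does not actually prove this statement: it is quoted from Falahola and Marley \cite{FM}*{Theorem 3.1} (restated later as Theorem \ref{FM thm}), and the paper's own Theorem \ref{main result} is presented only as a \emph{derived} analogue of it. So there is no in-paper proof to compare against; your proposal has to stand on its own merits.

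Your forward implication and the reduction to the complete case are fine, and you have correctly located the crux: the hypothesis concerns the ordinary base change $\phiR\tensor_R\omega_R$, not the derived one, so Theorem \ref{main result} cannot be invoked with $X=\omega_R$. But the proposal stops exactly at that crux. The concrete plan you sketch --- resolve $\omega_R$ by a finite free complex $F$, apply $\varphi$ to the boundary matrices, and identify $\RHom_R(\phiR\tensor_R\omega_R,\omega_R)$ with a complex assembled from $\phiR$ and $\RHom_R(\omega_R,\omega_R)\simeq R$ --- does not go through: $F^\varphi(F)$ computes $\phiR\dtensor_R\omega_R$, of which $\phiR\tensor_R\omega_R$ is only $H_0$, and $\RHom_R(-,\omega_R)$ applied to $H_0$ of a complex bears no controlled relation to $\RHom_R(-,\omega_R)$ applied to the complex itself precisely when the higher $\Tor^R_i(\phiR,\omega_R)$ survive, which is the non-Gorenstein situation you need to rule out. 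The paper itself signals (in the remark after Theorem \ref{FM thm}, citing \cite{FM}*{Example 3.8}) that the underived statement fails for a general dualizing complex, so the module case cannot simply be routed through the Hopkins--Neeman and thick-subcategory machinery of Section \ref{mainsection}. Falahola and Marley instead argue through the structure theory of finitely generated modules of finite injective dimension over a Cohen--Macaulay ring with canonical module (via the Sharp--Foxby correspondence, e.g.\ that a maximal Cohen--Macaulay module of finite injective dimension is a direct sum of copies of $\omega_R$), combined with tracking minimal numbers of generators under the iterates $F^{\varphi^i}$. As written, your converse is an accurate description of the difficulty rather than a resolution of it, so the proof is incomplete.
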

 In \cite{FM} the authors ask: when $R$ is a local ring with a dualizing complex $D$, if $\phiR \dtensor_R D$ has finite injective dimension is then $R$ necessarily  Gorenstein? Theorem \ref{main result} gives an affirmative answer.

\section{Homological Invariants}
In this Section we recall basic definitions and results for use in Section \ref{mainsection}. Throughout this paper $R$ will be a commutative Noetherian ring. We write $\DD(R)$ for the derived category of $R$-complexes, with the convention that complexes are graded below i.e. we write $$X = \dots \to X_n \to X_{n-1} \to \dots$$ We write $X\qis Y$ when $X$ is isomorphic to $Y$ in $\DD(R)$.

\begin{definition}
Given an $R$-complex $X$ we set
$$ \sup H(X) = \sup \{ i\mid H_i(X) \neq 0 \} \text{ and } \inf H(X) = \inf \{ i\mid H_i(X) \neq 0 \}$$
Thus $\sup (0) = -\infty $ and $\inf (0) = \infty $.
A complex $X$ is said to be \emph{homologically bounded above} if $\sup H(X)<\infty$. Similarly, $X$  is  \emph{homologically bounded below} if $\inf H(X) > -\infty$ and $X$ is \emph{homologically bounded} if it is homologically bounded above and below.\end{definition}
Let $\CC$ be a full subcategory of $\DD(R)$. We make the following conventions:
\begin{enumerate}
 \item$\CCl \coloneqq \{ X \in \CC \mid \sup H(X)<\infty \}$
\item $\CCh\coloneqq \{ X \in \CC \mid \inf H(X)> -\infty \}$
\item $\CCb \coloneqq \CCh \cap \CCl$
\item The subcategory of of complexes in $\CC$ with degree-wise finitely generated (resp. finite length) homology is denoted $\CCfg$ (resp. $\CCfl$)
\end{enumerate} 

In $\DD(R)$ we have derived functors $\RHom_R(\blank,\blank)$ and $\blank \dtensor_R \blank$. For a detailed description on how these functors are defined, we refer the reader to \cite{MR1117631}.

When $X$ is in $\DDh(R)$, there is a complex $P$ consisting of projective modules with $P_i = 0$ for $i\ll 0$ such that $P\qis X$. Such a complex $P$ is called a projective resolution of $X$. In this case we can compute $\RHom( X, \blank)$ by setting $$\RHom( X, \blank) = \Hom_R(P,\blank)$$ Flat and injective resolutions are similarly defined. Since $R$ has enough projectives and injectives, every complex in $\DDh(R)$ admits projective (and thereby flat) resolutions and every complex in $\DDl(R)$ admits injective resolutions.

Complexes in $\DDfgb(R)$ with finite projective dimension are the \emph{perfect complexes}. The subcategory of $\DD(R)$ of complexes of finite injective dimension plays a central role in this paper and we denote it $\II(R)$.
\begin{definition}
	Let $X\in \DDh(R)$ and $Y \in \DDl(R)$. We define
	\[\pd_R(X) \coloneqq \inf \left \{ n \left | \begin{gathered}  \text{ there exists a projective resolution $P$ of } X  \\ \ 0 \to P_n \to ... \to P_i\to 0 \text{ with }  P_n \neq 0 \end{gathered} \right \} \right. \]
	\[ \id_R(Y)  \coloneqq \sup \left\{ n \left | \begin{gathered} \text{ there exists an injective resolution $I$ of } Y \\ \ 0 \to I_i \to ... \to I_n \to 0 \text{ with }  I_n \neq0  \end{gathered} \right\} \right. \]

\end{definition}
 
\begin{remark}\label{accounting principles} Let $(R, \m ,k)$ be a local ring, $X\in \DDh(R)$. Following \cite{MR1799866}*{A.5.7, A.7.9} we have what Foxby called ``accounting principles''.
\begin{enumerate}
 \item When $V$ is an R-complex such that $\m V = 0$ then,
\begin{align*}
\sup H(V\dtensor_R X) & = \sup H(V) + \sup H(k\dtensor_R X) \\
\inf H(V\dtensor_R X) & = \inf H(V) + \inf H(k\dtensor_R X) 
\end{align*}
\item  If $X \in \DDfgb(R)$ and $Y\in \DDfgl(R)$ then the following hold
\begin{align*}
\pd_R(X) &= -\inf H(\RHom_R(X,k))= \sup H(k \dtensor_RX)\\
\id_R(Y) &= -\inf H(\RHom_R(k,Y))
\end{align*}
\end{enumerate}
\end{remark}

\begin{definition}
 A local ring $R$ is \emph{Gorenstein} if $\id_{R}(R)$ is finite.
\end{definition}

The following theorem \cite{MR1799866}*{3.3.4}, known as the Foxby equivalence, asserts that in a Gorenstein local ring, the categories of finite projective dimension and finite injective dimension coincide.
 \begin{theorem} \label{Foxby2}  Let $(R,\m,k)$ be a local Gorenstein ring, $X\in \DDb(R)$. Then
$$\pd_R(X) < \infty \iff \fd_R(X) < \infty \iff \id_R(X) < \infty$$
\end{theorem}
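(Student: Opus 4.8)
The plan is to reduce the three-fold equivalence to the single equivalence $\pd_R X<\infty \iff \id_R X<\infty$ and then prove each of the two implications by resolving $X$ and controlling the relevant invariant term by term. The reduction costs nothing and uses nothing about $R$ beyond that it is Noetherian local: one always has $\fd_R X\le \pd_R X$, and conversely, if $\fd_R X<\infty$ then a sufficiently high syzygy in a flat resolution of $X$ is a flat module, which has finite projective dimension (in fact $\le\dim R$) by the Gruson--Raynaud/Jensen theorem that flat modules over a Noetherian ring of finite Krull dimension have finite projective dimension; splicing this in bounds $\pd_R X$ by $\fd_R X+\dim R$. I would also use repeatedly the elementary fact that a homologically bounded complex quasi-isomorphic to a bounded complex of modules, each of injective dimension $\le n$ (resp.\ flat dimension $\le n$), has finite injective (resp.\ flat) dimension; this follows by induction on the number of nonzero terms from the short exact sequences of complexes obtained by peeling off the terms one at a time.

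Granting this, the implication $\pd_R X<\infty\Rightarrow \id_R X<\infty$ is immediate: replace $X$ by a bounded complex $F$ of free modules; since $R$ is Noetherian an arbitrary direct sum of injective modules is injective, so a direct sum of copies of a finite injective resolution of $R$ resolves any free module, giving $\id_R R^{(\Lambda)}\le \id_R R<\infty$ --- the finiteness being precisely the Gorenstein hypothesis --- and the elementary fact above applies to $F$.

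The substance of the theorem is the converse, $\id_R X<\infty\Rightarrow \pd_R X<\infty$. By definition $X$ is quasi-isomorphic to a bounded complex of injective modules, so by the elementary fact it suffices to show that every injective $R$-module has finite flat dimension over $R$: this yields $\fd_R X<\infty$, hence $\pd_R X<\infty$ by the first paragraph. Every injective module over a Noetherian ring is a direct sum of modules $E_R(R/\p)$, and finite flat dimension with a uniform bound passes to such sums, so it is enough to bound $\fd_R E_R(R/\p)$; since $E_R(R/\p)=E_{R_\p}(k(\p))$ is already an $R_\p$-module, comparing $\Tor$ over $R$ with $\Tor$ over $R_\p$ gives $\fd_R E_R(R/\p)\le \fd_{R_\p} E_{R_\p}(k(\p))$, and $R_\p$ is again Gorenstein. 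Thus everything reduces to the claim that over a Gorenstein local ring $(R,\m,k)$ of dimension $d$ the module $E_R(k)$ has finite flat dimension, and this I would extract from local duality: $R$ is Cohen--Macaulay (Bass), so $H^i_\m(R)=0$ for $i\ne d$, whence the \v{C}ech complex on a system of parameters is a flat resolution of $H^d_\m(R)$ of length $d$, giving $\fd_R H^d_\m(R)\le d$; and since $R$ is its own dualizing complex up to shift, local duality identifies $H^d_\m(R)$ with $E_R(k)$. Hence $\fd_R E_R(k)\le d$, which finishes the argument.

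The step I expect to be the real obstacle is exactly this last one: the identification $H^d_\m(R)\cong E_R(k)$, or informally the assertion that injective modules have finite flat dimension over a Gorenstein ring. It is the only place the argument must leave the comfortable world of finitely generated modules and minimal resolutions, and it is where the Gorenstein hypothesis is consumed (equivalently $\id_R R<\infty$, or $\operatorname{Ext}^i_R(k,R)=0$ for $i\gg0$, or $R$ being a dualizing complex for itself). A way to package the same point purely in the derived category is to note that, by local duality, each $E_R(R/\p)$ is --- up to shift --- the local cohomology complex $R\Gamma_{\p R_\p}(R_\p)$, which is computed by a \v{C}ech complex and is therefore visibly of finite flat dimension; making that identification precise is the heart of the matter.
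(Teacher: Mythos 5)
Your argument is correct, but note that the paper does not actually prove this statement: it is quoted from Christensen's \emph{Gorenstein dimensions} (3.3.4), where the proof runs through the Foxby equivalence proper --- the dualizing complex of a Gorenstein ring is $R$ itself up to shift, so the adjoint pair $D\dtensor_R-$ and $\RHom_R(D,-)$, which interchange the Auslander and Bass classes and restrict to an equivalence between the complexes of finite flat dimension and those of finite injective dimension, degenerate to the identity, forcing the two subcategories to coincide. Your route is more elementary and self-contained: you trade that machinery for two concrete facts, namely that over a Noetherian ring a direct sum of injectives is injective (so free modules inherit $\id_R R<\infty$, giving $\pd\Rightarrow\id$), and that over a Gorenstein local ring every indecomposable injective $E_R(R/\p)$ has flat dimension at most $\dim R$ (via localization at $\p$, the Cohen--Macaulay property, and the \v{C}ech-complex computation of $H^{d}_{\m}(R)\cong E_R(k)$), which together with Jensen/Raynaud--Gruson gives $\id\Rightarrow\fd\Rightarrow\pd$. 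The price is that you import local duality and the structure theory of injective modules, whereas the textbook proof hides these inputs inside the formalism of Auslander/Bass classes; the benefit is that your version makes explicit exactly where the Gorenstein hypothesis is consumed ($\id_R R<\infty$ is used once in each direction) and requires no dualizing-complex apparatus. All the individual steps you invoke --- the thickness-type induction on the terms of a bounded resolution, the comparison of $\Tor$ over $R$ and $R_\p$, the uniform bound $\dim R_\p\le\dim R$ across the direct-sum decomposition of an injective module --- are sound.
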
 

Foxby \cite{MR0447269}*{2.1} also proved the converse to Theorem \ref{Foxby2}. Although in the original paper it was stated for modules, it is well known to be true for complexes as well. For convenience, we give a self contained proof using the terminology and properties established above.
\begin{theorem}\label{foxby1}
Let $(R,\m,k)$ be a local ring. If there exists a complex $X\in \DDfgb(R)$ with $H(X) \neq 0$ such that both $\pd_R(X)$ and $\id_R (X) $ are finite, then $R$ is Gorenstein.
\end{theorem}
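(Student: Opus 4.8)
Here is how I would attack the statement.

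The plan is to compute $\RHom_R(k,X)$ well enough to see that $H_n\bigl(\RHom_R(k,R)\bigr)=0$ for $n\ll 0$; by Bass's theorem that a finitely generated module for which $\operatorname{Ext}^i_R(k,-)$ vanishes for $i\gg 0$ has finite injective dimension (\cite{MR1799866}), applied to $R$ itself, this forces $\id_R R<\infty$, i.e.\ $R$ is Gorenstein. The tempting shortcut --- $\pd_R X<\infty$ puts $R$ in the thick subcategory generated by $X$, so $\id_R R\le\id_R X$ --- is \emph{not} available, because $\Supp_R X$ can be a proper closed subset of $\Spec R$ (for example a Koszul complex on a system of parameters over a Gorenstein, non-regular ring has finite projective and finite injective dimension but support $\{\m\}$). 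So one must argue homologically.

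The crux is the following description, valid for \emph{any} nonzero perfect complex $X$: letting $F$ be a minimal free resolution of $X$ and $r_i=\operatorname{rank}_R F_i$, there is an isomorphism in $\DD(R)$
\[
  \RHom_R(k,X)\;\simeq\;\bigoplus_{i}\RHom_R(k,R)^{\,r_i}[i],
\]
where $[i]$ denotes the $i$-fold shift. To see this: since $X$ is perfect the biduality map $X\to\RHom_R\bigl(\RHom_R(X,R),R\bigr)$ is an isomorphism, so $X\simeq\RHom_R(G,R)$ with $G\coloneqq\RHom_R(X,R)=\Hom_R(F,R)$ --- and $G$ is again a bounded \emph{minimal} complex of finitely generated free modules, as its differentials are the transposes of those of $F$ and hence have entries in $\m$. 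Hom--tensor adjunction gives $\RHom_R(k,X)\simeq\RHom_R\bigl(k\dtensor_R G,\,R\bigr)$. Since $G$ is a bounded complex of free modules, $k\dtensor_R G=k\tensor_R G$, whose differential vanishes by minimality; thus $k\dtensor_R G\simeq\bigoplus_j k^{g_j}[j]$ with $g_j=\operatorname{rank}_R G_j=r_{-j}$. Applying $\RHom_R(-,R)$, which commutes with this finite direct sum, and reindexing yields the displayed isomorphism.

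To finish, note the lowest degree in which the minimal complex $F$ is nonzero is $\inf H(X)$ (Nakayama at the bottom of $F$), so the decomposition gives $\inf H\bigl(\RHom_R(k,X)\bigr)=\inf H(X)+\inf H\bigl(\RHom_R(k,R)\bigr)$. Because $\id_R X<\infty$, the complex $X$ is isomorphic in $\DD(R)$ to a bounded complex of injectives, so $\RHom_R(k,X)$ is bounded; and $\inf H(X)>-\infty$ since $X\in\DDfgb(R)$. Hence $\inf H\bigl(\RHom_R(k,R)\bigr)>-\infty$ --- exactly the vanishing that lets Bass's theorem conclude $\id_R R<\infty$. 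I expect the description of $\RHom_R(k,X)$ to be the real content: the point is to push $\RHom_R(k,-)$ through the duality with a \emph{minimal} free resolution so that $k\dtensor_R(-)$ kills all differentials, rather than trying to extract $R$ from $X$ inside $\DD(R)$, which the support obstruction forbids. (Feeding the same computation into the injective-dimension formula of Remark~\ref{accounting principles} --- after the harmless reduction to the case $H(X)$ of finite length by replacing $X$ with $X\dtensor_R K(\bsx)$ for $\bsx$ a system of parameters --- even yields the sharp equality $\id_R X=\id_R R-\inf H(X)$.)
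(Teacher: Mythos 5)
Your argument is correct, and it lands in the same place as the paper's proof --- the identity $\inf H(\RHom_R(k,X))=\inf H(\RHom_R(k,R))+\inf H(k\dtensor_RX)$, from which finiteness of $\id_RX$ forces $\operatorname{Ext}^i_R(k,R)=0$ for $i\gg0$ and hence $\id_RR<\infty$ by the Ext-characterization of injective dimension (Remark \ref{accounting principles}(ii), which is exactly the Bass-type vanishing you invoke) --- but the route to the key isomorphism is genuinely different. The paper uses tensor-evaluation for the perfect complex $X$ to write $\RHom_R(k,X)\qis\RHom_R(k,R)\dtensor_RX$ and then computes $\inf$ of the right-hand side with the accounting principle (a K\"unneth argument for complexes whose homology is killed by $\m$). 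You instead use biduality for perfect complexes to write $X\qis\RHom_R(G,R)$ with $G=\Hom_R(F,R)$ again minimal, pass through Hom-tensor adjunction, and let minimality kill the differentials of $k\tensor_RG$, yielding the explicit splitting $\RHom_R(k,X)\qis\bigoplus_i\Sigma^{i}\RHom_R(k,R)^{r_i}$; the two computations agree because $\inf H(k\dtensor_RX)=\inf H(X)$ for a minimal resolution. Your version is more self-contained --- it avoids citing tensor-evaluation and the accounting principles at the price of invoking minimal resolutions and biduality --- while the paper's is shorter given the cited machinery. Your opening remark that one cannot simply finitely build $R$ from $X$ because of the support obstruction is apt and correctly motivates the homological computation. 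Only the closing parenthetical is slightly off: the sharp equality $\id_RX=\id_RR-\inf H(X)$ already follows from your decomposition for any nonzero $X\in\DDfgb(R)$ of finite projective dimension, with no need to reduce to finite length homology.
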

\begin{proof}
 We have quasi-isomorphisms  $$\RHom_R(k,X) \qis \RHom_R(k,R\dtensor_R X) \qis \RHom_R(k,R)\dtensor_R X$$
The first quasi-isomorphism is trivial and the second follows from \cite{MR0447269}*{1.1.4} since $X$ is perfect. Also, since $X \in \DDfgb(R)$ we have $\inf H(k\dtensor_R X)$ is finite. As $H(X)\neq 0$, we get from \ref{accounting principles}:
\begin{align*}
\id_R( X) & = -\inf H(\RHom_R(k,X))\\
&=-\inf H(\RHom_R(k,R )\dtensor_RX)\\
& = -\inf H(\RHom_R(k,R)) - \inf H(k\dtensor_R X)\\ 
& =  \id_R R - \inf H(k\dtensor_R X) \end{align*} Thus $\id_R (R) < \infty \iff \id_R (X)<\infty$.\end{proof}

\subsection*{Thick subcategories and generation}
Thick subcategories play a critical role in the proofs in Section \ref{mainsection}. Here we recall the definition and give some examples, following the formulation given in \cite{MR2681707}*{\textsection 1}
\begin{definition}
A non-empty subcategory $\mathcal{T}$ of $\DD(R)$ is \emph{thick} if it is additive, closed under taking direct summands and for every exact triangle $$X \to Y \to Z \to \Sigma X$$ if any two of $X, Y,Z$ belong to $\mathcal{T}$, so does the third. From the definition it is clear that intersections of thick subcategories is again thick.
\end{definition}
\begin{example}\label{ex_thick} The subcategories $\DDfg(R), \ \DDfl(R),$  $\II(R)$ and the subcategory of perfect complexes are all thick in $\DD(R)$, see for example \cite{MR2225632}*{3.2}. It follows immediately that $\IIfg(R)$ and $\IIfl(R)$ are thick as well.
 
\end{example}

\begin{definition}
The \emph{thick subcategory generated by $X\in\DD(R)$}, denoted $\Thick_R(X)$, is the smallest thick subcategory that contains $X$.  It is the intersection of all thick subcategories of $\DD(R)$ containing $X$.
\end{definition}
\begin{example}\label{thickEx}
	We always have $\Thick_R(R)$ are the perfect complexes. When $(R, \m,k)$ is local we have $\Thick_R(k)= \DDflb(R)$.
\end{example}

For any $X\in \DD(R)$ one can construct $\Thick_R (X)$ as follows: Set $\Thick_R^0(X) = \{0\}$. The objects of $\Thick_R^1(X)$ are direct summands of finite direct sums of shifts of $X$. For each $n \ge 2$, the objects of $\Thick_R^n(X)$ are direct summands of objects $U$ such that $U$ appears in an exact triangle $$U' \to U \to U'' \to \Sigma U'$$ where $U' \in \Thick_R^{n-1}(X)$ and $U''\in \Thick_R^1(X)$. The subcategory $\Thick_R^n(X)$ is the \emph{$n$th thickening} of $X$. Every thickening embeds in the next one thus we have a filtration:
$$\{0\} = \Thick_R^0(X) \subseteq \Thick_R^1(X) \subseteq \Thick_R^2(X) \subseteq ... \subseteq \bigcup_{n \ge 0} \Thick_R^n(X)$$

It is clear that $\bigcup_{n \ge 0} \Thick_R^n(X)$ is a thick subcategory. By construction it is the smallest thick subcategory containing $X$ hence $$\Thick_R(X) = \bigcup_{n \ge 0} \Thick_R^n(X)$$ For a broader discussion see, for example, \cite{MR2681707}*{\S 1}. This discussion motivates the following terminology: An $R$-complex in $\Thick_R(X)$ is \emph{finitely built} from $X$.
\begin{definition}
 The \emph{support} of an $R$-complex $X$ is $$\Supp_R(X)\coloneqq \{\p \in \spec(R) \mid H(X)_{\p} \neq 0 \}$$
\end{definition}
When $X\in \DDfgb(R)$ the support is $$\Supp_R(X) = V(\ann_R(H(X)).$$
If $N\in \Thick_R(M)$, then from the construction it follows that $\Supp_R(N) \subseteq \Supp_R(M)$. Indeed, since localization is an exact functor, if $H(M)_{\p}=0$ for some $\p \in \spec(R)$ then inductively $H(N)_{\p}=0$ for every $N$ in $ \Thick_R^i(M)$ for all $i$. 

Hopkins \cite{MR932260}*{11} and Neeman \cite{MR1174255}*{1.2, 2.8} proved the following result which asserts that the converse is true when both $M$ and $N$ are perfect complexes.
\begin{theorem}\label{hopkins neeman} Let $R$ be a commutative Noetherian ring. Given perfect $R$-complexes $N$ and $M$, if $\Supp_R N \subseteq \Supp_R M$ then $N$ is finitely built from $M$. \hfill{$\qed$}
\end{theorem}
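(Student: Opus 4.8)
The plan is to reduce to a statement about Koszul complexes and then handle the remainder by a Noetherian induction whose inductive step rests on the localization theory of perfect complexes. Write $K(x;R)$ for the two-term complex $R\xrightarrow{\,x\,}R$ and, for a sequence $\underline x=x_1,\dots,x_c$, $K(\underline x;R)=K(x_1;R)\dtensor_R\cdots\dtensor_R K(x_c;R)$; recall $\Supp_R\big(K(\underline x;R)\big)=V(\underline x)$ and that $\Supp_R$ of a tensor product of perfect complexes is the intersection of the supports.

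\textbf{Cutting down the support inside $\Thick_R(M)$.} Since $M\dtensor_R(-)$ is a triangulated functor sending $R$ to $M$, it carries the perfect complexes $\Thick_R(R)$ into $\Thick_R(M)$; in particular $M\dtensor_R K\in\Thick_R(M)$ for every perfect $K$. Taking $K=K(\underline z;R)$ for a generating sequence $\underline z$ of $\ann_R(H(N))$ produces a complex $M\dtensor_R K(\underline z;R)\in\Thick_R(M)$ whose support is $\Supp_R(M)\cap V(\underline z)=\Supp_R(N)$. As $\Thick_R\big(M\dtensor_R K(\underline z;R)\big)\subseteq\Thick_R(M)$, this already reduces the theorem to the case $\Supp_R(N)=\Supp_R(M)$, and more importantly it is the mechanism for shrinking supports inside $\Thick_R(M)$ during the induction below.

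\textbf{The Koszul case.} I would first show that for any sequence $\underline x$ in $R$,
\[
\Thick_R\big(K(\underline x;R)\big)=\{\,C\in\DDperf(R)\ :\ \Supp_R(C)\subseteq V(\underline x)\,\}.
\]
Given such a $C$, the $R$-module $\Hom_{\DD(R)}(C,C)$ is finitely generated (as $C$ is perfect) with support $\Supp_R(C)\subseteq V(\underline x)$, so a power of each $x_i$ kills $1_C$; hence $x_i^{n_i}\colon C\to C$ is the zero morphism in $\DD(R)$ and $C$ is a direct summand of its cone $C\dtensor_R K(x_i^{n_i};R)$. Iterating over $i$ makes $C$ a summand of $C\dtensor_R K(x_1^{n_1},\dots,x_c^{n_c};R)$, which lies in $\Thick_R\big(K(x_1^{n_1},\dots,x_c^{n_c};R)\big)$; and $K(x_i^{n_i};R)\in\Thick_R\big(K(x_i;R)\big)$ by the octahedral axiom applied to $x_i^{n_i}=x_i\cdot x_i^{n_i-1}$, so after tensoring $K(x_1^{n_1},\dots,x_c^{n_c};R)\in\Thick_R\big(K(\underline x;R)\big)$. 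Thus, choosing $\underline x$ to generate $\ann_R(H(M))$, it suffices to prove $K(\underline x;R)\in\Thick_R(M)$, as then $\Thick_R(M)=\Thick_R\big(K(\underline x;R)\big)\ni N$.

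\textbf{The general case, and the main obstacle.} It remains to show: for perfect $M$ with $\Supp_R(M)=Z$, every perfect $C$ with $\Supp_R(C)\subseteq Z$ lies in $\Thick_R(M)$. I would argue by Noetherian induction on $Z$. The case $Z=\emptyset$ is trivial, and when $\Supp_R(C)\subsetneq Z$ one applies the inductive hypothesis to $M\dtensor_R K(\underline z;R)\in\Thick_R(M)$ (with $\underline z$ generating $\ann_R(H(C))$) just as above. The essential case is $\Supp_R(C)=Z$: choose a minimal prime $\p$ of $Z$ and, by prime avoidance, an element $s\in R$ contained in every other minimal prime of $Z$ but not in $\p$, so that $Z\cap V(s)\subsetneq Z$. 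Then $C\dtensor_R K(s;R)$ and $M\dtensor_R K(s;R)$ are supported on $Z\cap V(s)$, and by induction every perfect complex supported there---in particular $C\dtensor_R K(s;R)$---lies in $\Thick_R(M)$; meanwhile over $R_s$ one has $\Supp_{R_s}(C_s)\subseteq\Supp_{R_s}(M_s)$, the same assertion over $R_s$ with a strictly smaller support. The localization theorem of Thomason--Trobaugh and Neeman presents $\DDperf(R_s)$, up to direct summands, as the Verdier quotient of $\DDperf(R)$ by the perfect complexes supported on $V(s)$; invoking it, a finite building of $C$ from $M$ over $R_s$ lifts to a finite building of $C$ from $M$ and finitely many perfect complexes supported on $Z\cap V(s)$, and those are in $\Thick_R(M)$ by induction---so $C\in\Thick_R(M)$. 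The hard part is precisely this last step: it is not formal triangulated-categorical bookkeeping but requires the behaviour of perfect complexes under localization, including the usual care with idempotent completion (the quotient sequence is exact only up to summands), and the induction must be organised---say jointly on $\dim Z$---so that the instance over $R_s$ is actually available. (One can instead classify the localizing subcategories of the unbounded derived category via support and then pass to compact objects, but this merely relocates the same difficulty.) This localization phenomenon is exactly the content of the cited theorems of Hopkins and Neeman.
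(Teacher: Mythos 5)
The paper gives no proof of Theorem \ref{hopkins neeman}: it is quoted from Hopkins and Neeman with a $\qed$, so there is nothing internal to compare against, and your proposal must be judged as a self-contained argument. Its first half is correct and is the standard formal reduction: since a power of each $x_i$ annihilates the finitely generated $R$-module $\Hom_{\DD(R)}(C,C)$ and hence kills $1_C$, the perfect complex $C$ is a summand of $C\dtensor_R K(x_1^{n_1},\dots,x_c^{n_c};R)$, and $K(x^n;R)\in\Thick_R(K(x;R))$; this correctly reduces the theorem to showing that the Koszul complex on generators of $\ann_R H(M)$ lies in $\Thick_R(M)$.

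The second half has a genuine gap, which you in effect concede by writing that the remaining step ``is exactly the content of the cited theorems.'' Three concrete problems. First, the lifting claim is stronger than what the Thomason--Trobaugh--Neeman localization theorem delivers: from $C_s\in\Thick_{R_s}(M_s)$ one gets $C\in\Thick_R(\{M\}\cup\mathcal S)$ where $\mathcal S$ consists of \emph{all} perfect complexes supported on $V(s)$, not only those supported on $Z\cap V(s)$; complexes supported on $V(s)\setminus Z$ are generally not in $\Thick_R(M)$, so the induction does not close as written. (It can be repaired by tensoring the lifted building with $K(\underline z^{\underline n};R)$ and using that $C$ is a summand of $C\dtensor_R K(\underline z^{\underline n};R)$, but that step is absent.) Second, the induction is not well-founded: the instance over $R_s$ has support $V(\p R_s)$, which is not a proper closed subset of $Z$ and need not have smaller dimension, so neither Noetherian induction on closed subsets of $\Spec R$ nor induction on $\dim Z$ makes it available; indeed, iterating your step over $R_s$ only produces further localizations and never terminates. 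Third, even granting the localization theorem, the terminal case is never addressed: after localizing one must show that a perfect complex over a local ring with nonzero finite-length homology finitely builds the Koszul complex on the maximal ideal, and this is precisely the non-formal core of Hopkins--Neeman (it is where the tensor-nilpotence theorem, or Neeman's Bousfield-localization machinery, actually enters). So the proposal is a correct reduction of the theorem to its hard kernel, not a proof of it.
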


\subsection*{Loewy Length}
Another important element in this work is the Koszul complex. We recall the definition of Koszul complexes and Loewy length.
\begin{definition}\label{koszuldef}
 The \emph{Koszul complex} on $x\in R$ is the $R$-complex $$K(x) \coloneqq 0 \to R \xrightarrow{x} R \to 0$$ concentrated in degrees $0$ and $1$.
Given a sequence $\mathbf{x} =(x_1, ... ,x_n)$ the Koszul complex on $\mathbf{x}$ is $$K(\mathbf{x}) \coloneqq K(x_1) \tensor_R K(x_2) \tensor_R ... \tensor_R K(x_n)$$ with the convention that $K(\varnothing) = R$.
\end{definition}
Set $K^R$ to be the Koszul complex on a minimal generating set of $\m$. Since $K^R$ is a perfect complex, we have that $K^R \in \Thick_R(R)$. It follows, that $K^R\dtensor_RX $ is in $ \Thick_R (X)$ for every $X\in \DD(R)$.
\begin{definition}\label{def_LL}
Let $(R, \m, k)$ be a local ring, $X$ an $R$-complex. The \emph{Loewy length} of $X$ is defined to be
$$\lowlen_R(X) \coloneqq \inf\{ i\in \NN\mid \m^i \cdot X =0\}$$ Following \cite{MR2197067}*{6.2}, the \emph{homotopical Loewy length} of $X$ is defined to be
$$\lowlen_{\DD(R)}(X) \coloneqq \inf\{\lowlen_R(V) \mid V\simeq X\}$$
\end{definition}
Avramov, Iyengar and Miller \cite{MR2197067}*{6.2} prove that the homotopical Loewy length satisfies the following finiteness property. 
\begin{theorem}
Let $(R, \m, k)$ be a local ring, and $K^R$ be the Koszul complex on a minimal generating set of $\m$. For any complex X we have 
\[ \pushQED{\qed} \lowlen_{\DD(R)} (K^R \dtensor_R X) \le \lowlen_{\DD(R)} K^R < \infty  \qedhere \popQED \]
\end{theorem}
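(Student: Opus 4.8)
The plan is to prove the two assertions separately: first the inequality $\lowlen_{\DD(R)}(K^R\dtensor_R X)\le \lowlen_{\DD(R)}(K^R)$, which is formal, and then the finiteness $\lowlen_{\DD(R)}(K^R)<\infty$, which carries the content.

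For the inequality, fix any complex $V$ with $V\simeq K^R$ and $\m^i V=0$ where $i=\lowlen_R(V)$, and choose a semi-flat (K-flat) resolution $F\xrightarrow{\ \sim\ }X$. Then $V\otimes_R F$ represents $K^R\dtensor_R X$, and since $\m$ acts on this complex through the tensor factor $V$ we get $\m^i(V\otimes_R F)=(\m^i V)\otimes_R F=0$. Hence $\lowlen_R(V\otimes_R F)\le i$, so $\lowlen_{\DD(R)}(K^R\dtensor_R X)\le \lowlen_R(V)$; taking the infimum over all such $V$ yields $\lowlen_{\DD(R)}(K^R\dtensor_R X)\le \lowlen_{\DD(R)}(K^R)$. (In particular this already reduces the assertion ``$<\infty$'' to the case $X=R$, but we treat $K^R$ directly.)

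For the finiteness, observe that $H_\ast(K^R)$ is annihilated by the ideal generated by a minimal generating set of $\m$, i.e. by $\m$ itself, and is finitely generated because $K^R$ is a bounded complex of finitely generated free modules; hence $H_\ast(K^R)$ has finite length and $K^R\in \DDflb(R)$. By Example \ref{thickEx}, $\DDflb(R)=\Thick_R(k)$, so it suffices to prove that the full subcategory $\mathcal{L}=\{\,Y\in\DD(R)\mid \lowlen_{\DD(R)}(Y)<\infty\,\}$ contains $\Thick_R(k)$. Now $k\in\mathcal{L}$ since $\m k=0$, and $\mathcal{L}$ is plainly closed under suspensions and finite direct sums (take the maximum of the relevant powers of $\m$). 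Because every object of $\DDflb(R)$ is built from $k$ by finitely many suspensions and mapping cones — via the canonical truncation tower, each $H_n(Y)[n]$ being built from copies of $k$ by iterated extensions — it is enough to show $\mathcal{L}$ is closed under mapping cones: given $g\colon A\to B$ in $\DD(R)$ with $A\simeq V$, $B\simeq W$ and $\m^N$ annihilating both $V$ and $W$, one wants $\mathrm{Cone}(g)$ isomorphic to a complex annihilated by a power of $\m$.

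This last step is the genuine obstacle: $V$ and $W$ are complexes of $R/\m^N$-modules, but $g$ lives in $\DD(R)$ and need not lift to $\DD(R/\m^N)$, while replacing $V$ or $W$ by an $R$-projective resolution destroys the annihilation by $\m^N$. The clean way around it is the standard dévissage comparison: the natural functor $\DD^b(\mathrm{fl}\text{-}R)\to\DDflb(R)$ from the bounded derived category of finite-length modules is an equivalence — here one uses that the $\m$-power-torsion submodule of a finitely generated module is finite length, and that a surjection of a finitely generated module onto a finite-length module factors through the finite-length module $A/\m^N A$, so the comparison conditions for dévissage hold. Granting this, every $Y\in\DDflb(R)$ — in particular $K^R$ — is isomorphic in $\DD(R)$ to a bounded complex of finite-length modules, which is annihilated by one power of $\m$ (the largest of the powers annihilating its finitely many terms); hence $\lowlen_{\DD(R)}(Y)<\infty$. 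Full faithfulness in that equivalence is exactly what lets the connecting map of the truncation triangle be realized inside $\DD^b(\mathrm{fl}\text{-}R)$, which is the cone-closure of $\mathcal{L}$ needed above. (An alternative route: pass to $\hat R=Q/I$ with $Q$ regular local — legitimate since $K^R\to K^R\otimes_R\hat R$ is a quasi-isomorphism of $R$-complexes — and use that over $Q$ the complex $K^R$ has finite length homology, hence finite projective dimension, so $K^R\in\Thick_Q(K^Q)=\Thick_Q(k)$ by Theorem \ref{hopkins neeman}; but converting a ``built from $k$ over $Q$'' statement into an ``annihilated by a power of $\m$ over $R$'' statement again comes down to the same dévissage input.) So I expect the flat-resolution inequality and the reduction to $\DDflb(R)$ to be routine, and the identification $\DDflb(R)\simeq \DD^b(\mathrm{fl}\text{-}R)$ — equivalently, closure of $\mathcal{L}$ under mapping cones — to be the one real technical ingredient.
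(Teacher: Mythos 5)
The paper offers no proof of this statement: it is quoted from Avramov--Iyengar--Miller \cite{MR2197067}*{6.2} with a \qed, so your argument is necessarily a different route from the source. Your first half is correct and is the standard argument: tensoring a representative $V$ of $K^R$ with $\m^iV=0$ against a K-flat resolution of $X$ kills $\m^i$ on a representative of $K^R\dtensor_RX$. For the second half, the cited proof in \cite{MR2197067} works through a Cohen presentation $\widehat R=Q/\mathfrak a$ with $Q$ regular and DG (module) structures, and yields an explicit upper bound for $\lowlen_{\DD(R)}K^R$; you instead argue that $K^R\in\DDflb(R)$ and that every object of $\DDflb(R)$ is isomorphic in $\DD(R)$ to a bounded complex of finite-length modules, hence is annihilated up to quasi-isomorphism by a single power of $\m$. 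That key claim is true, your reduction to it is sound, and you correctly identify it as the only real content; the trade-off is that d\'evissage gives finiteness without an effective bound.

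The one point that needs repair is your verification of the d\'evissage hypotheses, because the two conditions you name do not suffice as stated. The ``projective-type'' condition --- given a surjection $A\twoheadrightarrow B$ with $A$ finitely generated and $B$ of finite length, find a finite-length \emph{submodule} of $A$ still surjecting onto $B$ --- genuinely fails (for $R$ a discrete valuation ring, $A=R$, $B=k$, the only finite-length submodule of $R$ is $0$), and neither the finiteness of the $\m$-power torsion of $A$ nor the factorization of $A\to B$ through $A/\m^NA$ repairs this. What does work is the dual, ``injective-type'' condition: for a finite-length submodule $B\subseteq A$ with $A$ finitely generated, the composite $B\to A\to A/\m^NA$ is injective for $N\gg0$ --- and this requires the Artin--Rees lemma ($B\cap\m^NA\subseteq\m^{N-c}B=0$), which is the ingredient missing from your sketch. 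With Artin--Rees supplying that condition (plus the routine replacement of a complex with finitely generated homology by a bounded complex of finitely generated modules), the identification of $\DDflb(R)$ with the bounded derived category of finite-length modules holds and your proof closes; your parenthetical alternative via $\widehat R=Q/I$ and Theorem \ref{hopkins neeman} runs into the same cone-closure issue, as you note, and is essentially the route taken in \cite{MR2197067}.
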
 
We say that a homomorphism $\varphi \colon (R,\m,k) \to (S,\n,l)$ is a \emph{deep local homomorphism} if $\varphi(\m) \subseteq \n^{c}$ where $c=\lowlen_{\DD(S)} K^S$.
The following corollary is often used in the literature to prove various results for deep local homomorphism.
\begin{lemma}\label{finiteLL} If $\varphi \colon (R,\m,k) \to (S,\n,l)$ is a deep local homomorphism, then in $\DD(R)$ the complex $K^S$ is quasi-isomorphic to $H(K^S)$.
 \end{lemma}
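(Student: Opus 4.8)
The plan is to combine the defining property of the homotopical Loewy length with the fact that every complex over a field is formal. Set $c=\lowlen_{\DD(S)}K^S$, which is finite by the theorem of Avramov, Iyengar and Miller recalled above. Since this invariant is the infimum of a set of nonnegative integers, it is attained: there is a complex $V$ of $S$-modules with $V\simeq K^S$ in $\DD(S)$ and $\n^{c}\cdot V=0$. This $V$ is the model I would work with.

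First I would restrict scalars along $\varphi$. Because $\varphi$ is deep local we have $\varphi(\m)\subseteq\n^{c}$, so, regarded as an $R$-complex via $\varphi$, the complex $V$ satisfies $\m\cdot V=\varphi(\m)\cdot V\subseteq\n^{c}\cdot V=0$. Hence $V$ is a complex of $R/\m=k$-vector spaces whose differentials are $R$-linear, equivalently $k$-linear. Moreover $V\simeq K^{S}$ in $\DD(S)$, hence also in $\DD(R)$, and in particular $H(V)\cong H(K^{S})$ as graded $R$-modules.

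Next I would invoke that over the field $k$ every complex splits as the direct sum of its homology and a contractible complex: choosing in each degree a $k$-linear splitting of the inclusion of cycles into the chains and of the inclusion of boundaries into cycles produces a chain homotopy equivalence $V\simeq H(V)$ of $k$-complexes, where $H(V)$ is equipped with the zero differential. Such a homotopy equivalence is automatically $R$-linear, since $R$ acts on both sides through $k$, so $V\simeq H(V)$ in $\DD(R)$. Concatenating $K^{S}\simeq V\simeq H(V)\cong H(K^{S})$ in $\DD(R)$ yields the claim.

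I do not expect a serious obstacle here; the two points that need care are (a) noting that the infimum defining $\lowlen_{\DD(S)}K^{S}$ is genuinely attained, so that one obtains an honest model $V$ with $\n^{c}V=0$, and (b) bookkeeping of derived categories --- the splitting is constructed over $k$, and one must check it persists after restriction of scalars along $\varphi$ and is compatible with the identification $H(V)\cong H(K^{S})$ inherited from $\DD(S)$.
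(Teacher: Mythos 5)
Your proposal is correct and follows essentially the same route as the paper: pick a model $V$ of $K^S$ with $\n^{c}V=0$, observe that deepness forces $\m V=0$ so that $V$ lives over $k$, and use formality of complexes over a field. The extra care you take about the infimum being attained and the $R$-linearity of the splitting is sound but not needed beyond what the paper already records.
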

\begin{proof}
By \ref{def_LL} there exist a complex $V$ such that $K^S \qis V$ and $\n^cV =0$. As $\varphi(\m) \subseteq \n^c$, this yields that $\m V = 0$. Hence the $R$ action on $V$ factors through the map $R \to R/\m =k$. Since $k$ is a field, for every $V \in \DD(k)$ we have $V \qis H(V)$. In particular, $K^S \qis H(K^S)$ in $\DD(k)$ so the same is true in $\DD(R)$.
\end{proof}

\section{Homological Dimension and the Derived Base Change}\label{mainsection}
Let $\varphi \colon R \to S$ be a homomorphism. There is a naturally defined functor $F^\varphi$ from the category of $R$-complexes to the category of $S$-complexes by setting $$F^\varphi(\blank) \coloneqq S \tensor_R \blank$$ 
We write $$\LF \colon \DD(R) \to \DD(S) \text{ by } \LF(\underline{\hspace{8pt}})= S\dtensor_R \underline{\hspace{8pt}}$$ for the induced functor on $\DD(R)$. 
\begin{remark} \label{non zero}
Let $\varphi \colon (R,\m,k) \to (S,\n,l)$ be a local homomorphism. \begin{enumerate} 
 \item For every perfect complex $X$ the complex $\LF(X)$ is perfect in $\DD(S)$. Indeed, as $X$ is perfect, there exists a finite free resolution $F\qis X$. Then $\LF(X) \qis S\tensor_R F$ which is a finite complex of free $S$ modules.
\item For every $X\in \DDfgh(R)$ such that $H(X) \neq 0$ we have $H(\LF(X)) \neq 0$. Indeed, we may assume $H_0(X) \neq 0$ and $H_i(X)=0$ for all $i<0$. We have $$H_0(S \dtensor_R X) \cong S \tensor_R H_0(X)$$ Applying $S \tensor_R \blank$ to the surjection $H_0(X) \surject H_0(X)/\m H_0(X)\surject k$ we get $H_0(S \dtensor_R X) \surject S \tensor_R k \cong \dfrac{S}{\m S} \neq 0$ as $\varphi$ is a local homomorphism.
\end{enumerate}
\end{remark}

\begin{proposition}\label{finite pd}
 Let $\varphi \colon (R,\m,k) \to (S,\n,l)$ be a deep local homomorphism. For any $X\in \DDfgh(R)$ the complex $\LF(X)$ is homologically bounded above if and only if $X$ has finite projective dimension in $\DD(R)$.
\end{proposition}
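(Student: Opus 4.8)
The "if" direction is easy: if $X$ has finite projective dimension, then $X$ is finitely built from $R$ (it lies in $\Thick_R(R)$, being perfect — here I use $X \in \DDfgh(R)$ together with finite projective dimension to get $X \in \DDfgb(R)$), hence $\LF(X)$ is finitely built from $\LF(R) = S$, so $\LF(X)$ is perfect in $\DD(S)$ and in particular homologically bounded. Alternatively one computes directly: a finite free resolution of $X$ maps to a bounded complex of free $S$-modules.

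The substance is the converse. The plan is to reduce to a Loewy-length/Koszul-complex computation using the hypothesis that $\varphi$ is \emph{deep}. First I would recall that for $X \in \DDfgh(R)$, finiteness of projective dimension is detected by the Koszul complex: more precisely, $\pd_R(X) < \infty$ if and only if $K^R \dtensor_R X$ is homologically bounded, and in fact $X$ is then finitely built from $k$. (This is the standard fact underlying the "new intersection"-type arguments; it uses that $K^R$ has finite length homology.) So it suffices to show that $\LF(X)$ homologically bounded above forces $K^R \dtensor_R X$ to be homologically bounded. Next I would bring in Lemma \ref{finiteLL}: since $\varphi$ is deep, $K^S$ is quasi-isomorphic in $\DD(R)$ to its homology $H(K^S)$, a complex annihilated by $\m$. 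Now consider $K^S \dtensor_S \LF(X) = K^S \dtensor_R X$ (restriction of scalars along $\varphi$), viewed in $\DD(R)$: on one hand, since $\LF(X)$ is homologically bounded above, so is $K^S \dtensor_S \LF(X)$; on the other hand, using the $\DD(R)$-isomorphism $K^S \simeq H(K^S)$ and the accounting principle \ref{accounting principles}(i) applied to the $\m$-torsion complex $V = H(K^S)$, we get
$$\sup H\big(H(K^S) \dtensor_R X\big) = \sup H(K^S) + \sup H(k \dtensor_R X).$$
Since $H(K^S) \neq 0$ (as $S \neq 0$), $\sup H(K^S)$ is a finite integer, so boundedness above of the left side forces $\sup H(k \dtensor_R X) < \infty$, i.e. $\pd_R(X) < \infty$ by \ref{accounting principles}(ii) (noting $X \in \DDfgh(R)$ and $H(X)\neq 0$ reduce, after truncation, to the finitely-generated-homology bounded case where that formula applies).

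The step I expect to be the main obstacle is making rigorous the identification $K^S \dtensor_S \LF(X) \simeq K^S \dtensor_R X$ in $\DD(R)$ together with the associativity bookkeeping for the bimodule $S = \phiR$, and then justifying that $H(k \dtensor_R X)$ being bounded above genuinely yields $\pd_R X < \infty$ for a general $X \in \DDfgh(R)$ rather than just $X \in \DDfgb(R)$ — this requires a soft truncation argument (replace $X$ by a suitable truncation without affecting either hypothesis) and the observation that $k \dtensor_R X$ controls projective dimension even when $X$ is only bounded below, via the graded Nakayama lemma on a minimal free resolution. The rest is routine manipulation of the functor $\LF$ and the accounting principles already recorded.
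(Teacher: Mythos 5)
Your argument is correct and follows essentially the same route as the paper: pass to $K^S\dtensor_S\LF(X)\simeq K^S\dtensor_R X$, which is bounded above by the thick-subcategory argument, use Lemma \ref{finiteLL} to replace $K^S$ by the $k$-vector space $H(K^S)$, and then read off $\sup H(k\dtensor_R X)<\infty$ (the paper invokes the K\"unneth formula where you invoke the accounting principle \ref{accounting principles}(i), but these are the same computation). Your extra care about applying \ref{accounting principles}(ii) to $X\in\DDfgh(R)$ rather than $\DDfgb(R)$ is a legitimate point the paper elides, and your minimal-free-resolution justification handles it; the preliminary reduction to boundedness of $K^R\dtensor_R X$ is never actually used and can be dropped.
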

\begin{proof}
The if part is clear. For the converse, by the disscusion after \ref{koszuldef} the complex $K^S \dtensor_R X \qis K^S \dtensor_S (S\dtensor_R X) $ is in $ \Thick_S(S \dtensor_R X)$. Example \ref{ex_thick} yields $$\sup H(S \dtensor_R X) < \infty \implies \sup H( K^S \dtensor_R X)  < \infty.$$
By Corollary \ref{finiteLL}, the complex $K^S \simeq H(K^S)$ in $\DD(R)$ and $H(K^S)$ is a $k$-vector space as an $R$-complex, one gets by the Künneth formula 
\begin{align*} H(K^S \dtensor_R X ) & \cong H( H(K^S) \dtensor_R X )\\ & \cong H( H(K^S) \tensor_k (k\dtensor_R X ))\\ & \cong H(K^S) \tensor_k H(k \dtensor_R X)\end{align*}
Since $H( K^S \dtensor_R X)$ is bounded, so is $H(k \dtensor_R X)$. Therefore $\pd_R(X) < \infty$ by Remark \ref{accounting principles} (ii).
\end{proof}
\begin{corollary}\label{inj-DLH}
 Let $\varphi \colon (R,\m,k) \to (S,\n,l)$ be a deep local homomorphism. If there exists an $X \in \IIfg(R)$ with $H(X) \neq 0$ and $\id_S( \LF(X))< \infty$ then $R$ is Gorenstein.
\end{corollary}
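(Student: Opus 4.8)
The plan is to reduce the statement to the combination of Proposition~\ref{finite pd} and the converse Foxby theorem \ref{foxby1}, so the real work is to produce, out of the hypothesis $\id_S(\LF(X)) < \infty$, the conclusion that $X$ is a perfect $R$-complex — i.e. that $\sup H(\LF(X)) < \infty$. Once we know $\pd_R(X) < \infty$, we are done: by hypothesis $X \in \IIfg(R)$ with $H(X) \neq 0$, so $X \in \DDfgb(R)$ has both finite projective and finite injective dimension, and \ref{foxby1} says $R$ is Gorenstein. So everything comes down to showing $\LF(X)$ is homologically bounded above, after which Proposition~\ref{finite pd} applies (note $X \in \IIfg(R) \subseteq \DDfgb(R) \subseteq \DDfgh(R)$, so the hypotheses of \ref{finite pd} are met).

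First I would observe that finite injective dimension of $\LF(X)$ over $S$ forces $\LF(X) \in \DDb(S)$, in particular $\sup H(\LF(X)) < \infty$. This is essentially by definition: a complex of finite injective dimension is quasi-isomorphic to a bounded complex of injectives, hence is homologically bounded. So the passage from $\id_S(\LF(X)) < \infty$ to $\sup H(\LF(X)) < \infty$ is immediate, and then Proposition~\ref{finite pd} gives $\pd_R(X) < \infty$. (One should double-check that $H(\LF(X)) \neq 0$, which one needs to invoke \ref{foxby1}; but this follows from Remark~\ref{non zero}(ii) since $X \in \DDfgh(R)$ with $H(X) \neq 0$ and $\varphi$ is a local homomorphism. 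Alternatively it follows once we know $\pd_R(X)<\infty$ together with $H(X)\neq 0$, via $H_0$ of a minimal free resolution.)

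Putting the pieces together: from $\id_S(\LF(X)) < \infty$ we get $\LF(X)$ is homologically bounded above; by Proposition~\ref{finite pd} this yields $\pd_R(X) < \infty$; since also $X \in \IIfg(R)$ we have $\id_R(X) < \infty$; and $X \in \DDfgb(R)$ with $H(X) \neq 0$. Theorem~\ref{foxby1} then gives that $R$ is Gorenstein.

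Honestly, I expect no serious obstacle here — the corollary is a formal consequence of the two substantial results already in hand. The only point requiring a moment's care is bookkeeping about which subcategory $X$ lives in: one must confirm that membership in $\IIfg(R)$ already entails $X \in \DDfgb(R)$ (finite injective dimension implies homologically bounded, finitely generated homology is part of the $fg$ decoration), so that both \ref{finite pd} and \ref{foxby1} genuinely apply without any extra hypothesis. If one wanted to be maximally careful one could also spell out why $\id_R(X) < \infty$ is exactly what $X \in \IIfg(R)$ means, but that is immediate from the definition of $\II(R)$.
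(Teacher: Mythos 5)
Your proposal is correct and follows exactly the paper's own argument: pass from $\id_S(\LF(X))<\infty$ to homological boundedness of $\LF(X)$, apply Proposition~\ref{finite pd} to get $\pd_R(X)<\infty$, and conclude with Theorem~\ref{foxby1}, with the nonvanishing of $H(\LF(X))$ supplied by Remark~\ref{non zero}. No gaps.
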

\begin{proof}
 By Remark \ref{non zero}(b), the homology $H(\LF(X)) \neq 0.$  By hypothesis $X$ is in $ \DDfgb(R)$. Hence $$\id_S( \LF(X))< \infty \implies \sup H(\LF(X)) < \infty$$ Therefore $\pd_R(X) < \infty$ by Proposition \ref{finite pd}. Theorem \ref{foxby1} now shows that $R$ is Gorenstein.
\end{proof}

\begin{remark}
 In the context of the Corollary \ref{inj-DLH}, if there exists an $X\in \DDfgb(R)$ with $H(X) \neq 0$ such that $\id_R ( \LF(X)) < \infty$ then $R$ is regular. Indeed, if $\id_R( \LF(X) ) < \infty$ then following the lines of the proof of Proposition \ref{finite pd}, we see that $\id_R (K^S \dtensor_R X )< \infty$. It follows that $\id_{R} (k \dtensor_R X)$ is finite and therefore $\id_R(k)<\infty$ which implies that $R$ is regular. This gives another proof of a result of Avramov, Hochster, Iyengar and Yao \cite{MR2915536}*{5.3}.
\end{remark}
Our main result concerns the finiteness of injective dimension with respect to the derived base change over contracting endomorphisms. 
\begin{definition}
 Let $(R,\m,k)$ be a local ring. An endomorphism $\varphi\colon R \to R$ is said to be \emph{contracting} if $\varphi^i(\m) \subseteq \m^2$ for some $i\ge 1$.
\end{definition}
\begin{remark} If $\varphi$ is a contracting endomorphism then $\varphi^i$ will be a deep local homomorphism for each $i \gg0$.
\end{remark}
If $\varphi$ is an endomorphism on $R$, then we define $\phiR$ to be $R$ with the right module structure induced by $\varphi$. Proposition \ref{finite pd} shows that given a contracting endomorphism $\varphi$ and a complex $X$ then for large enough $i$ the complex $\LF^i(X)$ is bounded if and only if $X$ has finite projective dimension. However, there are examples of complexes of infinite projective dimension for which $\LF(X)$ is homologically bounded. For example, let 
$$R=\dfrac{k[ x,y ]}{(x^3,y^3)}$$
Set $\varphi(x)=y$ and $\varphi(y)=y^2$. One can check that $\LF(x)\qis (y)$ but $(x)$ has infinite projective dimension. A natural question to ask is when does $\sup H(\LF(X))<\infty$ imply that $\sup H(\LF^i(X)) <\infty$ for all $i>0$? Our goal is to show that if $X$ has finite injective dimension then $\LF(X)$ is homologically bounded if and only if $\LF^i(X)$ is homologically bounded for every $i>0$.
\begin{definition}
 Let $(R,\m,k)$ be a local ring and $E$ the injective hull of the $R$-module $k$. For an $R$-complex M set $$M^\vee \coloneqq \Hom_R(M,E).$$
\end{definition}
We will need the following lemma; we give a proof for completeness.

\begin{lemma}\label{std lemma} Let $(R,\m,k)$ be a local ring.
	\begin{enumerate}
		\item The natural map $X \to X^{\vee\vee}$ is a quasi-isomorphism for all $X\in \DDflb(R)$.
		\item The complex $X^{\vee}$ is perfect with finite length homology for all $X\in \IIfl(R)$.
	\end{enumerate}
\end{lemma}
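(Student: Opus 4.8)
\textbf{Proof plan for Lemma \ref{std lemma}.}

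The plan is to treat both parts by the same strategy: verify the statement for the single complex $k$ (or for $R$), and then bootstrap to the whole subcategory using thickness and Example \ref{thickEx}. For part (i), the natural biduality map $X \to X^{\vee\vee}$ is a morphism in $\DD(R)$, and the full subcategory of $\DDflb(R)$ on which it is a quasi-isomorphism is thick: the map is natural, it is compatible with shifts and finite direct sums, and for an exact triangle $X \to Y \to Z \to \Sigma X$ the induced map of triangles shows that if two of the three vertices have the biduality property then so does the third (apply the long exact homology sequence and the five lemma, noting that $\blank^\vee = \Hom_R(\blank,E)$ is exact since $E$ is injective, so biduality commutes with the triangle). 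By Example \ref{thickEx}, $\DDflb(R) = \Thick_R(k)$, so it suffices to check biduality for $X = k$; there $k^\vee = \Hom_R(k,E) \cong k$ and $k^{\vee\vee} \cong k$, and one checks the natural map is the identity on $k$. Hence biduality holds on all of $\DDflb(R)$.

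For part (ii), again argue that the class of $X$ for which $X^\vee$ is perfect with finite-length homology is thick in $\IIfl(R)$: applying $\blank^\vee$ to an exact triangle in $\IIfl(R)$ gives an exact triangle of the $\vee$'s (exactness of $\Hom_R(\blank,E)$), and both ``perfect'' and ``finite-length homology'' are properties closed under shifts, finite sums, summands, and two-out-of-three in triangles (Example \ref{ex_thick} for perfect complexes and $\DDflb(R)$). So it remains to identify a generator of $\IIfl(R)$ and check the claim there. Every $X \in \IIfl(R)$ lies in $\Thick_R(E)$: indeed $E$ has finite length over each quotient and $E = k^\vee$, and the injective hull $E$ generates the finite-length injectives; more directly, since $X \in \IIfl(R) \subseteq \DDflb(R) = \Thick_R(k)$ and $\II(R)$ is thick, one has $X \in \Thick_R(k) \cap \II(R)$, and this intersection is $\Thick_R(E)$ because $E \in \II(R) \cap \DDflb(R)$ and $E$ finitely builds $k$ up to the bidual. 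Then $E^\vee = \Hom_R(E,E) \cong \widehat{R}$ — but to stay over $R$ itself it is cleaner to observe that for a finite-length complex $X$ one has $X \simeq X^{\vee\vee}$ by part (i), and $X^\vee \in \DDfgb(R)$ with finite projective dimension follows from $\id_R X < \infty$ via the standard duality $\id_R(X) < \infty \iff \pd_R(X^\vee) < \infty$ together with finiteness of length of $H(X^\vee)$ coming from finiteness of length of $H(X)$ and the fact that Matlis duality preserves length.

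The cleanest route for part (ii), and the one I would actually write, avoids hunting for a generator: take $X \in \IIfl(R)$, so $X \in \DDflb(R)$; by part (i), $X \simeq X^{\vee\vee}$, so $X^\vee \in \DDflb(R)$ as well once we know its homology has finite length — and $H_i(X^\vee) \cong \Hom_R(H_{-i}(X), E)$ (again exactness of $\Hom_R(\blank, E)$), which has the same (finite) length as $H_{-i}(X)$, so in particular $X^\vee \in \DDfgb(R)$. Finiteness of projective dimension of $X^\vee$ is then equivalent to finiteness of injective dimension of $X^{\vee\vee} \simeq X$, which holds by hypothesis; here one uses the standard fact that for $Y \in \DDfgb(R)$, $\pd_R(Y) < \infty \iff \id_R(Y^\vee) < \infty$, which itself follows from Remark \ref{accounting principles}(ii) since $\RHom_R(k, Y^\vee) \simeq (k \dtensor_R Y)^\vee$ and Matlis duality interchanges $\sup$ and $-\inf$. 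Thus $X^\vee$ is perfect with finite-length homology. I expect the main obstacle to be purely bookkeeping: being careful that $\blank^\vee$ genuinely sends exact triangles to exact triangles (i.e. that $E$ being injective makes $\Hom_R(\blank,E)$ exact at the level of the derived category, so no extra resolutions intervene) and that the various finiteness conditions really are preserved; none of these is deep, but each must be stated precisely so the thick-subcategory reduction is airtight.
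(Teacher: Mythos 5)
Your part (i) is essentially the paper's proof: both arguments observe that the complexes satisfying biduality form a thick subcategory, identify $\DDflb(R)$ with $\Thick_R(k)$, and check the claim on $k$. For part (ii) your final ("cleanest route") argument is correct but genuinely different from the paper's. The paper works with a minimal injective resolution: since $\Supp(X)=\{\m\}$, the resolution is a bounded complex of direct sums of $E$, so Matlis duality turns $X^\vee$ into a bounded complex of free $\widehat R$-modules, and perfection descends from $\widehat R$ to $R$ by flatness. You instead get finite-length homology of $X^\vee$ directly from exactness of $\Hom_R(\blank,E)$ and length-preservation of Matlis duality, and then deduce perfection from the adjunction $\RHom_R(k,Y^\vee)\simeq (k\dtensor_R Y)^\vee$ combined with Remark \ref{accounting principles}(ii) and part (i) ($X\simeq X^{\vee\vee}$). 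This trades the structural analysis of the injective resolution for a formal duality between $\pd$ and $\id$; both are standard, and yours has the mild advantage of never leaving $\DD(R)$. Two small caveats: your argument silently uses that $\Hom_R(M,E)\neq 0$ whenever $M\neq 0$ (needed so that Matlis duality really interchanges $\sup$ and $-\inf$ of homology), which is true but should be said; and your discarded first sketch for (ii) contains a genuine error --- $E$ does not lie in $\IIfl(R)$ unless $R$ is Artinian (its homology is Artinian, not of finite length), so the proposed generator $\Thick_R(E)=\Thick_R(k)\cap\II(R)$ does not work --- but since you abandon that route, it does not affect the proof you would actually write.
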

\begin{proof}

For (i), we observe that $\{X\in \DD(R) \mid X\qis X^{\vee \vee} \}$ form a thick subcategory. When $X\in \DDflb(R)$ one can show that by induction on the total length of $H(X)$ that $X \in \Thick_R(k)$. Clearly $k \qis k^{\vee \vee}$ so it follows that $X \qis X^{\vee \vee}$ for all $X \in \DDflb(R)$.

For (ii), take an injective resolution $I$ of $X$. Since $\Supp(X)=\{\m\}$, the injective resolution $I$ is a finite complex where all the modules are direct sums of $E$. Hence by Matlis duality, $X^{\vee}$ is quasi-isomorphic to a bounded complex of free $\widehat{R}$ modules. As $\widehat{R}$ is flat, $$\sup(k \dtensor_R X^\vee) = \sup(k \dtensor_RX^\vee \dtensor_R \widehat{R})$$ Since $X^{\vee}$ is perfect in $\DD(\widehat{R})$ it is also perfect in $\DD(R)$.	
\end{proof}

\begin{proposition}\label{HN dual}
For all $X\in \IIfl(R)$ with $H(X)\neq 0$, one has $\Thick_R (X)= \IIfl(R)$. 
\end{proposition}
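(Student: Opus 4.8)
The plan is to use Matlis duality to transport the problem into the world of perfect complexes, where the Hopkins–Neeman theorem \ref{hopkins neeman} applies, and then dualize back. Fix $X\in\IIfl(R)$ with $H(X)\neq 0$. Since $\IIfl(R)$ is thick (Example \ref{ex_thick}), one containment is automatic: $\Thick_R(X)\subseteq\IIfl(R)$. For the reverse containment it suffices to take an arbitrary $Y\in\IIfl(R)$ and show $Y$ is finitely built from $X$; and here we may as well assume $H(Y)\neq 0$, since $0\in\Thick_R(X)$ trivially.

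The key step is to pass to $(-)^\vee$. By Lemma \ref{std lemma}(ii), both $X^\vee$ and $Y^\vee$ are perfect complexes with finite length homology. Moreover $X^\vee\not\qis 0$ and $Y^\vee\not\qis 0$ because Matlis duality is faithful on $\DDflb(R)$ — indeed $H(X^\vee)\cong H(X)^\vee\neq 0$. Since a nonzero complex with finite length homology is supported exactly at $\{\m\}$, we have $\Supp_R(X^\vee)=\{\m\}=\Supp_R(Y^\vee)$. Hopkins–Neeman (Theorem \ref{hopkins neeman}) then gives $Y^\vee\in\Thick_R(X^\vee)$.

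Now I would push this back through the duality functor. The functor $(-)^\vee=\RHom_R(-,E)$ is exact (contravariant), additive, and sends shifts to shifts and direct summands to direct summands; hence it carries $\Thick_R(X^\vee)$ into $\Thick_R(X^{\vee\vee})$. Applying it to the containment $Y^\vee\in\Thick_R(X^\vee)$ yields $Y^{\vee\vee}\in\Thick_R(X^{\vee\vee})$. By Lemma \ref{std lemma}(i), the biduality maps $X\to X^{\vee\vee}$ and $Y\to Y^{\vee\vee}$ are quasi-isomorphisms (both $X,Y\in\DDflb(R)$ since $\IIfl(R)\subseteq\DDfgb(R)$). Therefore $\Thick_R(X^{\vee\vee})=\Thick_R(X)$ and $Y\qis Y^{\vee\vee}\in\Thick_R(X)$, which is what we wanted.

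The one point that needs a little care — and which I expect to be the main (if modest) obstacle — is verifying cleanly that $(-)^\vee$ preserves thick subcategory membership in the required direction, i.e. that it maps $\Thick_R(X^\vee)$ into $\Thick_R(X^{\vee\vee})$. This is really just the observation that a contravariant exact functor commuting with shifts sends the inductive construction $\Thick_R^n(-)$ to $\Thick_R^n$ of the image: it turns the defining triangles $U'\to U\to U''\to\Sigma U'$ into triangles of the same shape (up to rotation and shift), and commutes with finite direct sums and direct summands. One should also note at the outset that $\IIfl(R)\subseteq\DDfgb(R)$ so that Lemma \ref{std lemma} is applicable, and record that $H((-)^\vee)$ is Matlis duality on homology, which makes the nonvanishing claims immediate. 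Everything else is bookkeeping.
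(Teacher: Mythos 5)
Your proposal is correct and follows the same route as the paper's own proof: dualize into perfect complexes via Lemma \ref{std lemma}(ii), apply Hopkins--Neeman to the supports $\{\m\}$, and dualize back using biduality from Lemma \ref{std lemma}(i). The extra care you take in checking that $(-)^\vee$ carries $\Thick_R(X^\vee)$ into $\Thick_R(X^{\vee\vee})$ is exactly what the paper compresses into the phrase ``applying Matlis duality again.''
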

\begin{proof}
 By Example \ref{ex_thick}, $\Thick_R(X) \subseteq \IIfl(R)$, so it suffices to show that for all $Y \in \IIfl(R)$ we have $Y$ is in $\Thick_R(X)$. Let $Y \in \IIfl(R)$ with $H(Y) \neq0$. By Lemma \ref{std lemma}(ii) $X^{\vee}$ and $Y^{\vee}$ are complexes with finite length homology and finite projective dimension over $R$. In particular, $X^\vee$ and $Y^\vee$ are both perfect, and $$\Supp (X^\vee) = \{ \m \}= \Supp (Y^\vee)$$ Theorem \ref{hopkins neeman} yields that 
$$\Thick_R (X^\vee) = \Thick_R (Y^\vee)$$
Applying Matlis duality again, and noting that $X^{\vee \vee} \qis X$ by Lemma \ref{std lemma}(i), we see that $\Thick_R (X) = \Thick_R (Y).$ In particular, $Y $ is in $ \Thick_R(X)$.
\end{proof}
\begin{lemma}\label{iterate phi}
 Let $\varphi \colon (R,\m,k) \to (R,\m,k)$ be contracting endomorphism. If for some $X \in \IIfl(R)$ with $H(X) \neq0$ the injective dimension of $\LF(X)$ is finite then the injective dimension of $\mathsf{L}F^{\varphi^i} (Y)$ is finite for all $i \ge 1$ and all $Y \in \IIfl(R)$.
\end{lemma}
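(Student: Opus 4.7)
My plan is to split the argument into two conceptual steps and then iterate. Step 1 promotes the hypothesis on the single complex $X$ to all of $\IIfl(R)$: the full subcategory
\[
\mathcal{T} = \{Z \in \DD(R) \mid \id_R(\LF(Z)) < \infty\}
\]
is thick, since $\LF$ is a triangulated functor and $\II(R)$ is thick by Example \ref{ex_thick}. Because $X \in \mathcal{T}$ and Proposition \ref{HN dual} identifies $\Thick_R(X) = \IIfl(R)$, this yields $\id_R(\LF(Z)) < \infty$ for every $Z \in \IIfl(R)$.

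Step 2 shows that $\LF$ actually carries $\IIfl(R)$ into $\IIfl(R)$, so that the first step can be applied repeatedly. For $Z \in \IIfl(R)$, Step 1 gives $\id_R(\LF(Z)) < \infty$, whence $H(\LF(Z))$ is bounded. A bounded-below projective resolution $P$ of $Z$ by finitely generated free modules (available because $Z \in \DDfgh(R)$) realises $\LF(Z)$ as $\phiR \otimes_R P$, whose homology is therefore degree-wise finitely generated. For the support, $Z \in \DDflb(R) = \Thick_R(k)$ by Example \ref{thickEx}, so $\LF(Z) \in \Thick_R(\LF(k))$; since each $\Tor_j^R(\phiR, k)$ is supported in $\Supp_R(k) = \{\m\}$, the same bound passes to $H(\LF(Z))$. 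A finitely generated module supported at $\m$ has finite length, so $\LF(Z) \in \IIfl(R)$.

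With Steps 1 and 2 in hand, the lemma follows by induction on $i$, using the natural isomorphism $\mathsf{L}F^{\varphi^i} \cong \LF \circ \mathsf{L}F^{\varphi^{i-1}}$ arising from $\varphi^i = \varphi \circ \varphi^{i-1}$: assuming inductively that $\mathsf{L}F^{\varphi^{i-1}}(Y) \in \IIfl(R)$ for any $Y \in \IIfl(R)$, Step 1 provides $\id_R(\mathsf{L}F^{\varphi^i}(Y)) < \infty$, while Step 2 keeps $\mathsf{L}F^{\varphi^i}(Y)$ in $\IIfl(R)$ so the next iteration proceeds. The main obstacle is Step 2: one might naively hope to deduce finite length of $H(\LF(Z))$ from $Z \in \DDflb(R)$ alone, but $\LF(k)$ itself can have unbounded homology, so boundedness of $H(\LF(Z))$ must be recovered from the finite injective dimension supplied by Step 1, and only then combined with the support bound from the thick-subcategory embedding into $\Thick_R(\LF(k))$ and the degree-wise finite generation coming from the free resolution.
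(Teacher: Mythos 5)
Your Step 1 and the inductive scheme are exactly the paper's argument, and you have correctly isolated the delicate point: to iterate, one needs $\LF$ to carry $\IIfl(R)$ back into $\IIfl(R)$, not merely into $\II(R)$ (the paper's proof asserts this preservation essentially without justification, saying ``by hypothesis $\LF(X)\in\IIfl(R)$'' when the hypothesis only gives finite injective dimension). However, the justification you offer in Step 2 fails at the support claim, and in fact Step 2 is false under the hypotheses of the lemma. The module $\Tor_j^R(\phiR,k)$ carries two $R$-actions; the one relevant to $\id_R(\LF(Z))$ and to the length of $H(\LF(Z))$ is the action through the left (target) copy of $R$, and with respect to that action $\Tor_0^R(\phiR,k)\cong R/\varphi(\m)R$ is a module over the closed fibre, with support $V(\varphi(\m)R)$ --- it is only the source action, through $\varphi$, that annihilates these $\Tor$ modules by $\m$. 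For the Frobenius the closed fibre is Artinian, so $V(\varphi(\m)R)=\{\m\}$, but not for a general contracting endomorphism: take $R=k[[x]]$ and $\varphi(f)=f(0)$, which is contracting since $\varphi(\m)=0\subseteq\m^2$. Then $k\in\IIfl(R)$ and $\id_R(\LF(k))<\infty$ (so the lemma's hypotheses hold with $X=k$, $R$ being regular), yet $H_0(\LF(k))\cong H_1(\LF(k))\cong R$: the homology is not of finite length and its support is all of $\Spec R$. So $\LF$ does not map $\IIfl(R)$ into $\IIfl(R)$, and the induction cannot proceed as you (and the paper) set it up: Step 1 genuinely requires its input to lie in $\IIfl(R)$, since Proposition \ref{HN dual} rests on Matlis duality and Hopkins--Neeman and needs finite length homology, whereas after one application you only know $\LF(Y)\in\IIfg(R)$.

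The gap can be bridged by restoring finite length before each iteration: replace $\LF(Y)$ by $K^R\dtensor_R\LF(Y)$, which does lie in $\IIfl(R)$ (its homology is finitely generated, bounded, and killed by $\m$, and tensoring with a perfect complex preserves finite injective dimension since $\II(R)$ is thick), apply Step 1 to that complex, and carry the resulting base-changed Koszul factors along; because boundedness of $H(K\dtensor_R W)$ for a Koszul complex $K$ on elements of $\m$ forces boundedness of $H(W)$ when $H(W)$ is degreewise finitely generated and bounded below, this still produces, for a deep power $\varphi^i$, the homological boundedness of $\mathsf{L}F^{\varphi^i}$ of a nonzero complex in $\IIfg(R)$, which is all that Proposition \ref{finite pd}, Corollary \ref{inj-DLH} and Theorem \ref{foxby1} require. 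As written, however, your Step 2 contains a genuine error --- precisely at the point the paper's own proof leaves unaddressed --- rather than a complete argument.
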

\begin{proof}
 By Remark \ref{non zero} $H(\LF(X)) \neq 0$ when $H(X) \neq 0$. Proposition \ref{HN dual} shows that $\Thick_{ \DD (R) } (X)= \IIfl(R)$. Since $\LF(\blank)$ is an exact functor it follows that $\LF(Y)$ is in $\Thick_R (\LF(X))$ for every $Y\in \IIfl(R)$. By hypothesis $\LF(X) \in \IIfl(R)$, hence the functor $\LF(\blank)$ takes $\IIfl(R)$ to $\IIfl(R)$, but this implies that $  \mathsf{L}F^{\varphi^2} (Y) \cong \LF(\LF(Y))$ has finite injective dimension for every $Y \in \IIfl(R)$. By induction on $i$, we have $\mathsf{L}F^{\varphi^i} (Y)$ is finite for all $i \ge 1$ and all $Y \in \IIfl(R)$.
\end{proof}
The following theorem is a restatement of Theorem \ref{main result}.
\begin{Theorem}
Let $\varphi \colon (R,\m,k) \to (R,\m,k)$ be a contracting endomorphism. The following are equivalent.

\begin{enumerate}
 \item $R$ is Gorenstein.
\item There exists an $X \in \IIfg(R)$ with $H(X) \neq 0$ and $\LF(X) \in \IIfg(R)$.
  \item For every $X \in \IIfg(R)$ we have $\LF(X) \in \IIfg(R)$.
\end{enumerate}
\end{Theorem}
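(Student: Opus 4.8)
The plan is to prove (i) $\Rightarrow$ (iii) $\Rightarrow$ (ii) $\Rightarrow$ (i), with almost all of the work in the last implication. For (i) $\Rightarrow$ (iii): if $R$ is Gorenstein and $X \in \IIfg(R)$, then $\id_R(X)<\infty$ forces $\pd_R(X)<\infty$ by the Foxby equivalence (Theorem \ref{Foxby2}), so $X$ is perfect; since a contracting endomorphism is local, $\LF(X)$ is again perfect over $R$ by Remark \ref{non zero}(a), hence lies in $\DDfgb(R)$ with finite projective dimension, and Theorem \ref{Foxby2} applied again gives $\id_R(\LF(X))<\infty$, i.e. $\LF(X)\in\IIfg(R)$. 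For (iii) $\Rightarrow$ (ii) it is enough to exhibit one complex in $\IIfg(R)$ with nonzero homology, and I would use $X_0 \coloneqq (K^R)^\vee$, the Matlis dual of the Koszul complex on a minimal generating set of $\m$: it is a bounded complex of injectives (each term a finite direct sum of copies of $E$), so $\id_R(X_0)<\infty$, and since $\Hom_R(\blank,E)$ is exact its homology consists of Matlis duals of the homology modules $H(K^R)$, which are finitely generated and $\m$-torsion, hence of finite length; thus $H(X_0)$ is of finite length and nonzero (as $H_0(K^R)=k$). So $X_0 \in \IIfl(R)\subseteq\IIfg(R)$, and feeding $X_0$ to (iii) gives (ii).

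For (ii) $\Rightarrow$ (i), suppose $X \in \IIfg(R)$ has $H(X)\neq 0$ and $\LF(X)\in\IIfg(R)$. The strategy is to pass to a complex with finite-length homology, so that Lemma \ref{iterate phi} applies, and then iterate $\varphi$ until it becomes deep local. First I would set $X' \coloneqq K^R \dtensor_R X$. By the discussion following Definition \ref{koszuldef}, $X' \in \Thick_R(X)\subseteq\IIfg(R)$; since $\m$ annihilates the homology of $K^R\dtensor_R(\blank)$, each $H_i(X')$ is a finitely generated $k$-vector space, hence of finite length, so $X'\in\IIfl(R)$; and a standard nonvanishing argument (for instance $\Supp_R(X')=\Supp_R(K^R)\cap\Supp_R(X)=\{\m\}$, using $H(X)\neq 0$ and that $R$ is local) gives $H(X')\neq 0$. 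Since $\LF$ is exact, $\LF(X')\in\Thick_R(\LF(X))\subseteq\IIfg(R)$, so $\id_R(\LF(X'))<\infty$. Now Lemma \ref{iterate phi} applies to $X'\in\IIfl(R)$ and yields $\id_R(\mathsf{L}F^{\varphi^i}(Y))<\infty$ for all $i\geq 1$ and all $Y\in\IIfl(R)$. Finally I would choose $i$ large enough that $\varphi^i$ is a deep local homomorphism — possible since $\varphi$ is contracting — and apply Corollary \ref{inj-DLH} to the homomorphism $\varphi^i$ and the complex $X'\in\IIfg(R)$, which has $H(X')\neq 0$ and $\id_R(\mathsf{L}F^{\varphi^i}(X'))<\infty$; this forces $R$ to be Gorenstein.

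The step I expect to require the most care is the reduction in (ii) $\Rightarrow$ (i): one must verify that replacing $X$ by $K^R\dtensor_R X$ keeps the complex inside $\II(R)$, produces finite-length homology, and does not annihilate the homology, so that the Hopkins--Neeman-based Lemma \ref{iterate phi} becomes available. Once that is arranged, propagating the finiteness of injective dimension through the iterates $\varphi^i$ and landing on a deep local power is the point at which the earlier results (Proposition \ref{HN dual}, Lemma \ref{iterate phi}, Corollary \ref{inj-DLH}) are tied together.
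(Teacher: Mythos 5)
Your proposal is correct and follows essentially the same route as the paper: (i)$\Rightarrow$(iii) via the Foxby equivalence and preservation of perfection under base change, (iii)$\Rightarrow$(ii) by exhibiting a bounded complex of injectives with nonzero finite-length homology built from $K^R$ and $E$ (the paper uses $K^R\tensor_R E$, you use $(K^R)^\vee$, which is the same complex up to shift by self-duality of the Koszul complex), and (ii)$\Rightarrow$(i) by replacing $X$ with $K^R\dtensor_R X$ to land in $\IIfl(R)$, invoking Lemma \ref{iterate phi}, and passing to a power $\varphi^i$ that is deep local so that Corollary \ref{inj-DLH} applies. Your write-up just supplies a few more of the routine verifications (support computation, thickness arguments) than the paper does.
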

\begin{proof}
 (i) $\implies$ (iii). Theorem \ref{Foxby2} shows that $\IIfg(R)$ are the perfect complexes. So for every $X\in \IIfg(R)$ the base change $\LF(X)$ is also perfect and hence in $\IIfg(R)$.
 
(iii) $\implies$ (ii). We need to show that for every local ring there exists a complex $X\in \IIfg(R)$ with $H(X) \neq0$. Let $E$ be the injective hull of the residue field, $K^R$ the Koszul complex of $R$. The complex $K^R \tensor_R E$ is Artinian and $\m H( K^R \tensor_R E) =0$, hence it has finite length homology.

 (ii) $\implies$ (i), Let $K^R$ be the Koszul complex of $R$. Let $X\in \IIfg(R)$ with $\id_{R} (\LF( X)) < \infty$. Since $K^R \dtensor_R X$ still has finite injective dimension and  non-zero finite length homology, Lemma \ref{iterate phi} shows that 
  $$\id_{R} (\LF^i(K^R \dtensor_R X)) < \infty \quad\text{for all } i \ge 1$$ 
  Setting $c= \lowlen_{\DD(R)} K^R$ we can take $i$ large enough so that $\varphi^i(\m) \subseteq \m^c$. Corollary \ref{inj-DLH} yields that $R$ is Gorenstein.
\end{proof}

Theorem \ref{main result} is the derived analogue of the following result by Falahola and Marley. \cite{FM}*{Theorem 3.1}
\begin{theorem} \label{FM thm}
 Let $(R,\m,k)$ be a Cohen-Macaulay local ring, $\varphi$ be a contracting endomorphism. Suppose that $\omega_R$ is a canonical module for $R$, then $\id_{\phiR} F^\varphi(\omega_R)$ in finite if and only if $R$ is Gorenstein.
\end{theorem}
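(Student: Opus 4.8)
The plan is to deduce Theorem \ref{FM thm} from the already-established Theorem \ref{main result} by exhibiting $\omega_R$ (or a suitable shift/truncation of it) as an instance of the complex $X$ appearing in condition (ii), and conversely. The key observation is that over a Cohen-Macaulay local ring with canonical module, $\omega_R$ is a module with $\id_R(\omega_R) = \dim R < \infty$ and nonzero finitely generated homology, so $\omega_R \in \IIfg(R)$. Moreover $\omega_R$ is a maximal Cohen-Macaulay module, hence $\Tor_i^R(\phiR, \omega_R) = 0$ for $i > 0$ because $\varphi$ is a flat-ish... no, not flat in general; instead one uses that $\varphi^i$ is a deep local homomorphism for $i \gg 0$ and standard rigidity, or more simply that $F^\varphi(\omega_R) = \phiR \tensor_R \omega_R$ already computes the derived tensor product in degree zero when $\omega_R$ is Cohen-Macaulay and $\varphi$ is module-finite... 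Actually the cleanest route: just observe that $\phiR \dtensor_R \omega_R$ has finitely generated homology (it lies in $\DDfgb(R)$ since $\omega_R$ is a perfect complex over itself... no). I would instead not worry about whether derived and underived base change agree, and argue directly on the derived level.

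Here is the argument I would carry out. First, note $\omega_R \in \IIfg(R)$ with $H(\omega_R) = \omega_R \neq 0$, by definition of the canonical module. For the direction ``$R$ Gorenstein $\Rightarrow \id F^\varphi(\omega_R) < \infty$'': when $R$ is Gorenstein, $\omega_R \cong R$, so $F^\varphi(\omega_R) \cong \phiR$, and $\id_{\phiR}(\phiR)$ is finite iff $R$ is Gorenstein (using that $\phiR \cong R$ as a ring); alternatively invoke Theorem \ref{main result}(i)$\Rightarrow$(iii) applied to $X = \omega_R \qis R$, noting $\LF(R) \qis R$ is perfect hence in $\IIfg(R)$. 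For the converse: suppose $\id_{\phiR} F^\varphi(\omega_R) < \infty$. The main point is to identify $F^\varphi(\omega_R)$ with $\LF(\omega_R) = \phiR \dtensor_R \omega_R$ in $\DD(R)$, i.e.\ to show $\Tor_i^R(\phiR,\omega_R) = 0$ for $i \ge 1$. Then $\LF(\omega_R) \in \IIfg(R)$ because finite injective dimension of the $S$-module $F^\varphi(\omega_R)$ forces finite injective dimension over $R$ as well (restriction of scalars along the module-finite map $\varphi$, or: $\varphi$ makes $S$ module-finite over $R$ so an injective $S$-module is injective over $R$... one must be careful, but in the contracting/Cohen-Macaulay setting this is standard and is exactly how \cite{FM} set things up). Having placed $\omega_R$ and $\LF(\omega_R)$ in $\IIfg(R)$ with nonzero homology, condition (ii) of Theorem \ref{main result} is satisfied, so $R$ is Gorenstein.

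I expect the main obstacle to be the vanishing $\Tor_i^R(\phiR, \omega_R) = 0$ for $i>0$, i.e.\ that underived base change of the canonical module agrees with the derived base change. The way I would handle it: it suffices to check this after iterating, since $\varphi^i$ is deep local for $i \gg 0$; then use that over a Cohen-Macaulay ring a deep (in particular module-finite contracting) endomorphism sends the maximal Cohen-Macaulay module $\omega_R$ to an $S$-module that is still maximal Cohen-Macaulay, together with the Auslander--Buchsbaum-type depth count forcing the higher Tor to vanish — this is essentially the content of the rigidity results for contracting endomorphisms in \cite{MR2197067} and \cite{MR2915536}. A cleaner alternative that avoids this entirely: observe that $\omega_R$ is, up to shift, quasi-isomorphic to $\RHom_R(R, D)$ for the normalized dualizing complex $D$ of a Cohen-Macaulay ring, and $D \qis \Sigma^{\dim R}\omega_R$ is itself a complex in $\IIfg(R)$; so one may simply apply Theorem \ref{main result} to $X = \omega_R$ and only needs that $\id_{\phiR} F^\varphi(\omega_R) < \infty \Rightarrow \id_R \LF(\omega_R) < \infty$, reducing the whole theorem to the base-change agreement statement, which in the module-finite case is a routine (if slightly technical) depth argument.

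Finally I would remark that the equivalence in Theorem \ref{FM thm} is genuinely a corollary: the ``only if'' part for Gorenstein $R$ is immediate since then $\omega_R \cong R$ and $F^\varphi(\omega_R) \cong \phiR$ is a free $\phiR$-module of rank one hence of finite (indeed zero) injective dimension exactly when $R$ is Gorenstein, and the ``if'' part is the substantive direction handled above via Theorem \ref{main result}(ii)$\Rightarrow$(i).
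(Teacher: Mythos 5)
First, a point of comparison: the paper does not prove this statement at all --- it is quoted verbatim from Falahola--Marley \cite{FM}*{Theorem 3.1}, and the paper is careful to call Theorem \ref{main result} an \emph{analogue} of it rather than a generalization. So what you have written is not a reconstruction of an internal proof but an attempt to derive the underived statement from the derived one, and that derivation has a genuine gap.

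The gap is exactly the point you flag: identifying $F^\varphi(\omega_R)=\phiR\tensor_R\omega_R$ with $\LF(\omega_R)=\phiR\dtensor_R\omega_R$, i.e.\ proving $\Tor_i^R(\phiR,\omega_R)=0$ for $i>0$. Neither of your proposed fixes can work. The ``depth count after iterating'' route fails because that Tor-vanishing is not a neutral technical fact but is \emph{equivalent} to the conclusion of the theorem: if $\Tor_j^R(\phiRi,\omega_R)=0$ for all $j>0$ and some deep iterate $\varphi^i$, then $\phiRi\dtensor_R\omega_R\qis\phiRi\tensor_R\omega_R$ is homologically bounded, Proposition \ref{finite pd} gives $\pd_R\omega_R<\infty$, and Theorem \ref{foxby1} then forces $R$ to be Gorenstein; hence the vanishing is false for every non-Gorenstein Cohen--Macaulay ring and cannot be established unconditionally. (Moreover the hypothesis of the theorem concerns $F^\varphi$, not $F^{\varphi^i}$, so ``it suffices to check after iterating'' is not a legitimate reduction for identifying $F^\varphi(\omega_R)$ with $\LF(\omega_R)$ in the first place.) Your ``cleaner alternative'' ends by reducing the whole theorem to the same base-change agreement statement, i.e.\ it is openly circular. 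The Remark following the theorem in the paper is the warning sign here: Falahola--Marley's Example 3.8 shows the underived statement \emph{fails} for a general dualizing complex even though the derived statement holds, so the passage between $F^\varphi$ and $\LF$ is precisely where the content of their theorem lives and requires their own (non-derived) argument. The forward direction of your write-up ($R$ Gorenstein $\Rightarrow$ $\omega_R\cong R$ $\Rightarrow$ $\id_{\phiR}F^\varphi(\omega_R)<\infty$) is fine.
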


\begin{remark}
 Falahola and Marley \cite{FM}*{Example 3.8} show that Theorem \ref{FM thm} fails if we replace $\omega_R$ with a general dualizing complex $C$. They ask \cite{FM}*{Question 3.9}, if $R$ has a dualizing complex $C$ is it true that $\id_{R} ({}^{\varphi}R \dtensor_R C) < \infty$ if and only if $R$ is Gorenstein? Since a dualizing complex is in $\IIfg(R)$, Theorem \ref{main result} shows in particular that if $C$ is a dualizing complex in $\DD(R)$ then $\id_R(\LF(C))$ being finite implies that $R$ is Gorenstein, giving an affirmative answer.  
 
\end{remark}

\bibliographystyle{amsalpha}
\begin{bibdiv}
\begin{biblist}
\bib{MR1117631}{article}{
   author={Avramov, Luchezar L.},
   author={Foxby, Hans-Bj\o rn},
   title={Homological dimensions of unbounded complexes},
   journal={J. Pure Appl. Algebra},
   volume={71},
   date={1991},
   number={2-3},
   pages={129--155},
   issn={0022-4049},
   review={\MR{1117631}},
   doi={10.1016/0022-4049(91)90144-Q},
}

\bib{MR2915536}{article}{
   author={Avramov, Luchezar L.},
   author={Hochster, Melvin},
   author={Iyengar, Srikanth B.},
   author={Yao, Yongwei},
   title={Homological invariants of modules over contracting endomorphisms},
   journal={Math. Ann.},
   volume={353},
   date={2012},
   number={2},
   pages={275--291},
   issn={0025-5831},
   review={\MR{2915536}},
   doi={10.1007/s00208-011-0682-z},
}
\bib{MR2681707}{article}{
   author={Avramov, Luchezar L.},
   author={Iyengar, Srikanth B.},
   title={Cohomology over complete intersections via exterior algebras},
   conference={
      title={Triangulated categories},
   },
   book={
      series={London Math. Soc. Lecture Note Ser.},
      volume={375},
      publisher={Cambridge Univ. Press, Cambridge},
   },
   date={2010},
   pages={52--75},
   review={\MR{2681707}},
}

\bib{MR2197067}{article}{
   author={Avramov, Luchezar L.},
   author={Iyengar, Srikanth},
   author={Miller, Claudia},
   title={Homology over local homomorphisms},
   journal={Amer. J. Math.},
   volume={128},
   date={2006},
   number={1},
   pages={23--90},
   issn={0002-9327},
   review={\MR{2197067}},
}
\bib{MR1799866}{book}{
   author={Christensen, Lars Winther},
   title={Gorenstein dimensions},
   series={Lecture Notes in Mathematics},
   volume={1747},
   publisher={Springer-Verlag, Berlin},
   date={2000},
   pages={viii+204},
   isbn={3-540-41132-1},
   review={\MR{1799866}},
   doi={10.1007/BFb0103980},
}
\bib{FM}{article}{
     author = {Falahola, Brittney},
    author = {Marley, Thomas},
    title = {Characterizing Gorenstein rings using contracting endomorphisms},
    journal = {Journal of Algebra},
    date={2018},
       doi = {10.1016/j.jalgebra.2018.09.035},
     
}

\bib{MR0447269}{article}{
   author={Foxby, Hans-Bj\o rn},
   title={Isomorphisms between complexes with applications to the
   homological theory of modules},
   journal={Math. Scand.},
   volume={40},
   date={1977},
   number={1},
   pages={5--19},
   issn={0025-5521},
   review={\MR{0447269}},
   doi={10.7146/math.scand.a-11671},
}

\bib{MR2225632}{article}{
   author={Dwyer, W.},
   author={Greenlees, J. P. C.},
   author={Iyengar, S.},
   title={Finiteness in derived categories of local rings},
   journal={Comment. Math. Helv.},
   volume={81},
   date={2006},
   number={2},
   pages={383--432},
   issn={0010-2571},
   review={\MR{2225632}},
   doi={10.4171/CMH/56},
}
\bib{MR932260}{article}{
   author={Hopkins, Michael J.},
   title={Global methods in homotopy theory},
   conference={
      title={Homotopy theory},
      address={Durham},
      date={1985},
   },
   book={
      series={London Math. Soc. Lecture Note Ser.},
      volume={117},
      publisher={Cambridge Univ. Press, Cambridge},
   },
   date={1987},
   pages={73--96},
   review={\MR{932260}},
}
\bib{MR252389}{article}{
   author={Kunz, Ernst},
   title={Characterizations of regular local rings of characteristic $p$},
   journal={Amer. J. Math.},
   volume={91},
   date={1969},
   pages={772--784},
   issn={0002-9327},
   review={\MR{252389}},
   doi={10.2307/2373351},
}

\bib{NaSa15}{article}{
   author={Nasseh, Saeed},
   author={Sather-Wagstaff, Sean},
   title={Contracting endomorphisms and dualizing complexes},
   journal={Czechoslovak Math. J.},
   volume={65(140)},
   date={2015},
   number={3},
   pages={837--865},
   issn={0011-4642},
   review={\MR{3407609}},
   doi={10.1007/s10587-015-0212-3},
}

\bib{MR1174255}{article}{
   author={Neeman, Amnon},
   title={The chromatic tower for $D(R)$},
   note={With an appendix by Marcel B\"{o}kstedt},
   journal={Topology},
   volume={31},
   date={1992},
   number={3},
   pages={519--532},
   issn={0040-9383},
   review={\MR{1174255}},
   doi={10.1016/0040-9383(92)90047-L},
}

\bib{Rah09}{article}{
   author={Rahmati, Hamidreza},
   title={Contracting endomorphisms and Gorenstein modules},
   journal={Arch. Math. (Basel)},
   volume={92},
   date={2009},
   number={1},
   pages={26--34},
   issn={0003-889X},
   review={\MR{2471985}},
   doi={10.1007/s00013-008-2681-1},
}
\bib{MR963004}{article}{
   author={Rodicio, Antonio G.},
   title={On a result of Avramov},
   journal={Manuscripta Math.},
   volume={62},
   date={1988},
   number={2},
   pages={181--185},
   issn={0025-2611},
   review={\MR{963004}},
   doi={10.1007/BF01278977},
}
\bib{MR2073291}{article}{
	author={Takahashi, Ryo},
	author={Yoshino, Yuji},
	title={Characterizing Cohen-Macaulay local rings by Frobenius maps},
	journal={Proc. Amer. Math. Soc.},
	volume={132},
	date={2004},
	number={11},
	pages={3177--3187},
	issn={0002-9939},
	review={\MR{2073291}},
	doi={10.1090/S0002-9939-04-07525-2},
}
\bib{MR447212}{article}{
	author={Goto, Shiro},
	title={A problem on Noetherian local rings of characteristic $p$},
	journal={Proc. Amer. Math. Soc.},
	volume={64},
	date={1977},
	number={2},
	pages={199--205},
	issn={0002-9939},
	review={\MR{447212}},
	doi={10.2307/2041427},
}

\end{biblist}
\end{bibdiv}

\end{document}